\newcommand{\Ext}{\operatorname{Ext}}
\newcommand{\Tor}{\operatorname{Tor}}
\newcommand{\Soc}{\operatorname{Soc}}
\newcommand{\Hom}{\operatorname{Hom}}
\newcommand{\xra}{\xrightarrow}
\newcommand{\lra}{\longrightarrow}
\newcommand{\fm}{{\mathfrak m}}
\theoremstyle{remark}
\theoremstyle{plain}
\newtheorem{theorem}{Theorem}[section]
\newtheorem{prop}[theorem]{Proposition}
\newtheorem{lemma}[theorem]{Lemma}
\newtheorem{cor}[theorem]{Corollary}
\newtheorem{defn}[theorem]{Definition}
\theoremstyle{definition}
\newtheorem{chunk}[theorem]{}
\newtheorem*{chunk*}{}
\newtheorem{exam}[theorem]{Example}
\theoremstyle{remark}
\newtheorem{remark}[theorem]{Remark}
\newtheorem*{nota}{Notation}
\numberwithin{equation}{theorem}
\numberwithin{subchunk}{theorem}
\begin{document} \title[Vanishing of Tor]{Betti Numbers for Modules Over Artinian Local Rings}
\author[K. He]{Kaiyue He}
\address{Mathematics Department, Syracuse University, NY 13244}
\email{khe110@syr.edu}

\date{\today} 
\keywords{Vanishing of (co)homology, freeness criteria, Auslander-Reiten conjecture, Betti Numbers.}
\subjclass[2020]{Primary 13C12, 13C14, 13D22, 13D30, 13H99; Secondary 13H10, 13D02}
\maketitle

\begin{abstract}
We introduce a new numerical invariant $\gamma_I(M)$ associated to a finite-length $R$-module $M$ and an ideal $I$ in an Artinian local ring $R$. This invariant measures the ratio between $\lambda(IM)$ and $\lambda(M/IM)$. We establish fundamental relationships between this invariant and the Betti numbers of the module under the assumption of the $\Tor$ modules vanishing. In particular, we use this invariant to establish a freeness criterion for modules under certain $\Tor$ vanishing conditions. The criterion applies specifically to the class of $I$-free modules --- those modules $M$ for which $M/IM$ is isomorphic to a direct sum of copies of $R/I$. Lastly, we apply these results to the canonical module, proving that, under certain conditions on the ring structure, when the zeroth Betti number is greater than or equal to the first Betti number of the canonical module, then the ring is Gorenstein. This partially answers a question posed by Jorgensen and Leuschke concerning the relationship between Betti numbers of the canonical module and Gorenstein properties.
\end{abstract}

\section*{Introduction}

The study of Betti numbers plays a fundamental role in understanding the structure of modules over Noetherian rings. In particular, they provide valuable insights into the behavior of minimal free resolutions. Minimal resolutions over local rings have been extensively studied, and a central problem in this area is understanding the asymptotic growth of Betti numbers. Avramov \cite{avramov1998infinite} summarized key results concerning measures for the asymptotic size of resolutions, originally introduced by Alperin and Evens \cite{alperin1981representations} including the notions of complexity and curvature of a module. While the behavior of these invariants is well understood for Golod rings and complete intersections, less is known for general Cohen-Macaulay rings. \\

A key approach, inspired by various works on the Auslander-Reiten conjecture, is to examine pairs of modules $(M, N)$ for which $\Ext^i(M, N) = 0$ or $\Tor_i(M, N) = 0$ for some values of $i$. Several results suggest that, in such cases, the Betti numbers of $M$ and $N$ are closely related. The vanishing of these $\Tor$ and $\Ext$ modules is particularly significant, as it can have implications for the structure of the ring or the freeness of one of the modules. For example, Jorgensen and Leuschke \cite{jorgensen2007growth} showed that if $\Tor_i(M, N)$ vanishes for certain values of $i$, then the Betti growth of $N$ is bounded above by specific invariants of $M$. Furthermore, they demonstrated that under certain conditions, the vanishing of $\Tor$ with one of the modules being the canonical module implies that the ring is Gorenstein. Huneke, Sega, and Vraciu \cite{huneke2014vanishing} used a similar approach to establish connections between the vanishing of $\Tor$ and the growth of Betti numbers. They also proved that for local rings $(R, \fm)$ with $\fm^3 = 0$, the vanishing of certain $\Tor$ modules implies the freeness of one of the modules. \\

In this paper, we extend the framework developed in \cite{huneke2014vanishing} by introducing and analyzing a new invariant, $\gamma_I(M)$ (see Definition \ref{definition}), associated with an ideal $I$ of an Artinian local ring $R$ and a finite $R$-module $M$. Our results generalize to maximal Cohen-Macaulay modules over Cohen-Macaulay local rings by using the standard depth reduction techniques. This invariant provides a refined measure of the interplay between the module length and Betti growth. \\

In Section \ref{Betti numbers}, we establish fundamental results on Betti numbers and explore key properties of the invariant $\gamma_I(M)$. In particular, we prove that under the vanishing of two consecutive $\Tor$ modules and a quotient ring freeness condition (which we refer to as $I$-freeness; see Notation \ref{IFree}), the ratio of consecutive Betti numbers of a module remains fixed (Lemma \ref{bettiandgamma}), forming the foundation for the rest of the paper. \\

In Section \ref{Vanishing of Tor and Freeness of A Module}, we investigate how the vanishing of certain $\Tor$ modules affects the freeness of a module via our new invariant. We show that if three consecutive $\Tor$ modules vanish under appropriate $I$-freeness conditions, then the invariant $\gamma_I(M)$ satisfies a quadratic equation (Theorem \ref{ThreeVanishP}). Furthermore, we generalize the results of \cite{huneke2014vanishing} to local rings $(R, \fm)$ with an ideal $I$ satisfying $\fm^2 I = 0$, or more generally to modules annihilated by $\fm I$. \\

In Section \ref{Betti Numbers of the Canonical Module}, we apply our results to the canonical module $\omega$, providing conditions under which the invariant characterizes Gorenstein rings. In particular, we address a question posed in \cite{jorgensen2007growth}: given a ring $R$ with canonical module $\omega$, does $b_0(\omega) \leq b_1(\omega)$ imply that $R$ is Gorenstein? We obtain a partial answer to this question (Corollaries \ref{cor3.4} and \ref{cor3.5}). \\

Through these results, our work contributes to a deeper understanding of Betti number growth and its relationship to module structure, particularly in the context of vanishing $\Tor$ conditions. \\

\section{Betti numbers}
\label{Betti numbers}

In this paper $(R,\fm)$ denotes a commutative Artinian local ring
with a unique maximal ideal $\fm$. Let $I$ denote an ideal of $R$. For finite modules, we mean finitely generated modules.
The number $b_0(M)$ denotes the minimal number of generators of a finite module $M$ and
$\lambda(M)$ denotes the length of the module $M$.

\begin{chunk}
    \label{minimalres}

For every $i\ge 0$ we let $M_i$ denote the $i$-th syzygy of $M$ in a
minimal free resolution \[ \dots\lra R^{b_{i+1}(M)}\xrightarrow{\
\delta_i\ } R^{b_{i}(M)}\lra \dots\xra{\ \delta_0\ } R^{b_0(M)}\lra
M\lra 0.\]

In particular, the resolution satisfies $M_i \subset \fm R^{b_{i-1}(M)}$ for all $i \geq 1$.
\end{chunk}
The number $b_i(M)$ is called the $i$-th {\em Betti number} of $M$
over $R$. 

\begin{defn} \label{definition} For each nonzero $R$-module $M$ of finite length we set
\[
\gamma_I(M)=\frac{\lambda(M)}{\lambda(M/IM)}-1.
\]
\end{defn}

Huneke, \c Sega and Vraciu studied the case of $I = \fm$ in \cite{huneke2014vanishing}.
While studying the results in \cite{huneke2014vanishing}, we find that the critical property involved in the proof of all properties involving $\gamma_{\fm}(M)$ is the fact that for any module $M$, $M/\fm M$ is always free over $R/\fm$. Hence, their results can generalize to modules $M$ such that $M/IM$ is free over $R/I$ for a fixed ideal $I$.

\begin{chunk} For simplicity, we give the following notation:
\label{IFree}
\begin{nota}
    For an ideal $I$ and a module $M$, we say that $M$ is \textit{$I$-free} if $M/IM$ is free over $R/I$ as a $R/I$-module. 
\end{nota}
    In practice, it is not hard to check that a module $M$ is $I$-free.
    One can simply look at the presentation matrix $T_M$, of a module $M$. The module $M$ will always be \textit{$I$ free} for the ideals $I$ that contain the ideal generated by the entries of $T_M$.  

\end{chunk}

We will often use the definition of $\gamma_I(M)$ in length
computations as follows:
\begin{equation}
\label{length}
\lambda(M)=\lambda(M/IM)\big(\gamma_I(M)+1\big).
\end{equation}

Moreover, when $M$ $I$-free, we have 

\begin{equation}
\label{lengthfree}
\lambda(M)=b_0(M)\lambda(R/I)\big(\gamma_I(M)+1\big).
\end{equation}

\begin{remark}
Note that $\gamma_I(M)$ is also equal to $\displaystyle{\frac{\lambda
    (I M)}{\lambda(M/IM)}}$. The lower bound of $\gamma_I(M)$ is achieved
as follows: $\gamma(M)=0$ if and only if $I M=0$. If $M$ is $I$-free, then $\gamma_I(M)$ also has a natural upper bound, see Property \ref{property}(2).
\end{remark}

Next, we summarize some basic properties of the invariant $\gamma_I(M)$.

\begin{lemma}
\label{property}
Here we assume $(R,\fm)$ is a local Artinian ring, $I$ is a fixed ideal, and $M$ is a finite module.
\begin{enumerate}
    \item For any module $M$, we have $\lambda(M/IM) \leq b_0(M)\lambda(R/I)$ with equality if and only if $M$ is $I$-free.
    \item If $M$ is $I$-free , then $\gamma_I(M) \leq \gamma_I(R)$, with equality if and only if $M$ is free.
    \item If $I^2M = 0$, then $\gamma_I(M) \leq b_0(I)$.
    \item Let $N \subset M$ be a submodule, then $\gamma_I(M) = \gamma_I(M/N)$ if and only if $\gamma_I(M) = \dfrac{\lambda(N)}{\lambda(N\cap IM)}$.
    \item If $M$ and $N$ are $I$-free, then $\gamma_I(M\otimes N) \leq (\gamma_I(M)+1)\gamma_I(N)$.
\end{enumerate}
\end{lemma}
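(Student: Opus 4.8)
The common engine for all five parts is to estimate the length of a module above or below by that of a free module on the correct number of generators, and then translate via the identities $\lambda(M)=\lambda(M/IM)\big(\gamma_I(M)+1\big)$ and, when $M$ is $I$-free, $\lambda(M)=b_0(M)\lambda(R/I)\big(\gamma_I(M)+1\big)$; one also uses repeatedly that $M/IM$ is minimally generated by $b_0(M)$ elements over $R/I$, since $(M/IM)/\fm(M/IM)=M/\fm M$. For (1), the minimal presentation gives a surjection $(R/I)^{b_0(M)}\twoheadrightarrow M/IM$, whence $\lambda(M/IM)\le b_0(M)\lambda(R/I)$; equality forces this surjection of finite-length modules to have zero kernel, i.e. to be an isomorphism, so $M/IM$ is $R/I$-free of rank $b_0(M)$, which is exactly $I$-freeness, and the converse is immediate. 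For (2), apply $\lambda(M)\le\lambda(R^{b_0(M)})=b_0(M)\lambda(R)$ to the surjection $R^{b_0(M)}\twoheadrightarrow M$ and rewrite both sides with the $I$-free length formula (available for $M$ by hypothesis and for $R$ since $R/IR=R/I$); cancelling $b_0(M)\lambda(R/I)$ gives $\gamma_I(M)\le\gamma_I(R)$, and equality says $\lambda(M)=\lambda(R^{b_0(M)})$, forcing $R^{b_0(M)}\twoheadrightarrow M$ to be an isomorphism, i.e. $M$ free.

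For (3), when $I^2M=0$ each generator $x$ of $I$ carries $IM$ into $I^2M=0$, so multiplication by $x$ on $M$ kills $IM$ and factors through a surjection $M/IM\twoheadrightarrow xM$, giving $\lambda(xM)\le\lambda(M/IM)$; summing over a minimal generating set $x_1,\dots,x_{b_0(I)}$ of $I$ yields $\lambda(IM)\le\sum_i\lambda(x_iM)\le b_0(I)\lambda(M/IM)$, i.e. $\gamma_I(M)\le b_0(I)$. (Equivalently, the canonical surjection $I\otimes_RM\twoheadrightarrow IM$ factors through $(I\otimes_RM)\otimes_RR/I\cong(I/I^2)\otimes_{R/I}(M/IM)$, which is a quotient of $(M/IM)^{b_0(I)}$ because $b_0(I/I^2)=b_0(I)$.) For (4), from $0\to N\to M\to M/N\to 0$ one has $I(M/N)=(IM+N)/N$ and $(IM+N)/IM\cong N/(N\cap IM)$, which give $\lambda\big(I(M/N)\big)=\lambda(IM)-\lambda(N\cap IM)$ and $\lambda\big((M/N)/I(M/N)\big)=\lambda(M/IM)-\lambda(N)+\lambda(N\cap IM)$; substituting into $\gamma_I(M)=\gamma_I(M/N)$ and clearing denominators turns it, after rearrangement, into the asserted proportion among $\lambda(M)$, $\lambda(IM)$, $\lambda(N)$, $\lambda(N\cap IM)$, and every step is reversible.

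For (5), base change gives $(M\otimes_RN)\otimes_RR/I\cong M\otimes_R(N/IN)\cong(M/IM)^{b_0(N)}\cong(R/I)^{b_0(M)b_0(N)}$, so $M\otimes_RN$ is $I$-free with $b_0(M\otimes_RN)=b_0(M)b_0(N)$; tensoring the surjection $R^{b_0(M)}\twoheadrightarrow M$ with $N$ presents $M\otimes_RN$ as a quotient of $N^{b_0(M)}$, so $\lambda(M\otimes_RN)\le b_0(M)\lambda(N)$, and feeding this together with the two $I$-free length formulas into the definition of $\gamma_I$ gives $\gamma_I(M\otimes_RN)+1\le\gamma_I(N)+1$. (Running the symmetric estimate as well yields the stronger $\gamma_I(M\otimes_RN)\le\min\{\gamma_I(M),\gamma_I(N)\}$, which implies the stated bound $(\gamma_I(M)+1)\gamma_I(N)$ since $\gamma_I(M)\ge 0$.) The only places I expect to require care are the two base-change/tensor identities — the factorization of $I\otimes_RM\to IM$ through $(I/I^2)\otimes_{R/I}(M/IM)$ in (3) and $(M\otimes_RN)\otimes_RR/I\cong(M/IM)\otimes_{R/I}(N/IN)$ in (5) — together with keeping the length bookkeeping in (4) honest; everything else is routine.
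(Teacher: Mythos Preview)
Your arguments for (1)--(4) match the paper's essentially line for line: the surjection $(R/I)^{b_0(M)}\twoheadrightarrow M/IM$ for (1), the bound $\lambda(M)\le b_0(M)\lambda(R)$ combined with the $I$-free length formula for (2), the surjection $(M/IM)^{b_0(I)}\twoheadrightarrow IM$ for (3), and the bookkeeping via $(IM+N)/IM\cong N/(N\cap IM)$ for (4) are exactly the ingredients the paper uses, with only cosmetic differences in how the algebra is organized.

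For (5) you take a genuinely different and in fact sharper route. The paper bounds the \emph{numerator} of $\gamma_I(M\otimes N)$ by using the surjection $M\otimes IN\twoheadrightarrow I(M\otimes N)$ together with the crude estimate $\lambda(M\otimes IN)\le\lambda(M)\,\lambda(IN)$, and then rewrites the resulting fraction as $(\gamma_I(M)+1)\gamma_I(N)$. You instead bound the \emph{total length} via the surjection $N^{b_0(M)}\twoheadrightarrow M\otimes N$, giving $\lambda(M\otimes N)\le b_0(M)\,\lambda(N)$; combined with the $I$-free denominator $\lambda\big((M\otimes N)/I(M\otimes N)\big)=b_0(M)b_0(N)\lambda(R/I)$ this yields $\gamma_I(M\otimes N)\le\gamma_I(N)$, hence $\gamma_I(M\otimes N)\le\min\{\gamma_I(M),\gamma_I(N)\}$ by symmetry, which is strictly stronger than the stated inequality since $\gamma_I\ge 0$. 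Your approach is shorter and avoids having to control numerator and denominator separately; the paper's approach isolates $I(M\otimes N)$ directly, which is conceptually natural, but the passage through $\lambda(M)\,\lambda(IN)$ costs an extra factor and requires more care with the denominator when $\lambda(R/I)>1$.
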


\begin{proof}
    (1) Consider the short exact sequence
        \[
        0 \rightarrow M_1 \rightarrow R^{b_0(\fm)} \rightarrow M \rightarrow 0
        \]
        where $M_1$ is the first syzygy. Then apply $-\otimes R/I$ and obtain $(R/I)^{b_0(M)} \twoheadrightarrow M/IM$. The surjection is an isomorphism if and only if $M$ is $I$-free.
        (2) Assume $M$ is $I$-free. Then, notice that 
        \[
        \gamma_I(M) = \frac{\lambda(M)}{\lambda(M/IM)} - 1
        \leq \frac{\lambda(R)b_0(M)}{\lambda(M/IM)} - 1 = \frac{\lambda(R)}{\lambda(R/I)} - 1 
        = \gamma_I(R)
        \] where the last equality follows from $\lambda(M/IM) 
        = b_0(M)\lambda(R/I)$.
        If moreover $\gamma_I(M) = \gamma_I(R)$, then 
        \[
        \lambda(M)\lambda(R/I) = \lambda(R)\lambda(M/IM) = \lambda(R) b_0(M)\lambda(R/I)
        \]
        i.e. $\lambda(M) = \lambda(R) b_0(M)$. This implies $M$ is free. \\
       (3) Assume $b_0(I) = n $ and $ I  = (a_1,..,a_n)$. Then there is a surjection $M^n \rightarrow \frac{IM}{I^2M}$ that sends $(x_i)$ to $\sum_ia_ix_i$. Observe that $(IM)^n$ lies in the kernel of this map and therefore it induces a surjection $(\frac{M}{IM})^n \rightarrow \frac{IM}{I^2M}$. This shows that if $I^2M = 0$, then we have $b_0(I)\lambda(M/IM) \geq \lambda(IM)$. In other words, $\gamma_I(M) \leq b_0(I)$.
        (4) We have by definition $\gamma_I(M) = \gamma_I(M/N)$ if and only if
        \[
        \frac{\lambda(M)}{ \lambda(M\otimes R/I)} = \frac{\lambda(M)- \lambda(N)}{ \lambda(M/N \otimes R/I)}
        \]
        equivalently 
        \[
        \lambda(M)\lambda(M/N \otimes R/I) = \lambda(M\otimes R/I)(\lambda(M) - \lambda(N))
        \]
        or 
        \[
            \lambda(M)[\lambda(M)-\lambda(N)- \lambda(\frac{IM + N}{N})] = [\lambda(M)-\lambda(IM)][\lambda(M)-\lambda(N)].
        \]
         In particular, if $M$ is $I$-free, then 
            \[
            \lambda(M)[\lambda(M)- \lambda(IM + N) )] = [\lambda(M)-\lambda(IM)][\lambda(M)-\lambda(N)]
            \]
            or
            \[
            \lambda(M)\lambda(IM+N) = \lambda(M)[\lambda(N)+\lambda(IM)] - \lambda(IM)\lambda(N).
            \]
            We can rewrite it as 
            \[
             \lambda(M) = \frac{\lambda(IM)\lambda(N)}{\lambda(N)+\lambda(IM)-\lambda(IM+N)}
            \]
            and notice that $\displaystyle \lambda(N)+\lambda(IM)-\lambda(IM+N) = \lambda(N \cap IM)$, since $\displaystyle \dfrac{IM+N}{N} \cong \dfrac{IM}{N\cap IM}$. 

        (5) Notice that there is always a surjection $M\otimes IN$ to $I(M\otimes N)$ and $\displaystyle \lambda(M\otimes IN) \leq \lambda(M)\lambda(IN)$. Hence 
        \[
        \gamma_I(M\otimes N) = \frac{\lambda(I(M\otimes N))}{\lambda(R/I \otimes M\otimes N)} \leq \frac{\lambda(M)\lambda(IN)}{\lambda((R/I)^{b_0(M)})\lambda((R/I)^{b_0(N)})} = (\gamma_I(M)+1)\gamma_I(N).
        \]
   
\end{proof}

Next, we give a lemma that generalizes the result in \cite[1.4]{huneke2014vanishing}. it turns out that their results can be generalized by looking at ideals $I$ and modules such that $M$ is $I$-free.

\begin{lemma}
\label{bettiandgamma}
Let $M$, $N$ be
$R$-modules such that $M$ is nonzero and $N$ is non-free. Assume for some $i > 0$, $N_{i-1},N_i$ are $I$-free.
    \begin{enumerate}
        \item If $\Tor_i(M,N) = 0$, then:
        \[
        \dfrac{b_i(N)}{b_{i-1}(N)} = \dfrac{\gamma_I(M)-\gamma_I(M \otimes N_{i-1})}{\gamma_I(M \otimes N_i) + 1} \leq \gamma_I(M).
        \]
        \item If in addition, $\fm IM = 0$, then:
        \[
        \frac{b_i(N)}{b_{i-1}(N)} = \gamma_I(M) - \gamma_I(M \otimes N_{i-1}).
        \]
        \item If $\fm IM = 0$ and $\Tor_{i-1}(M,N) = \Tor_{i}(M,N) = 0$, then:
        \[
        \frac{b_i(N)}{b_{i-1}(N)} = \gamma_I(M).
        \]
    \end{enumerate}
    \end{lemma}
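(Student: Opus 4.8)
The plan is to derive all three statements from a single short exact sequence attached to the minimal free resolution of $N$. Fix $i>0$. By \ref{minimalres} there is a short exact sequence
\[
0\to N_i\to R^{b_{i-1}(N)}\to N_{i-1}\to 0
\]
whose left-hand map has image contained in $\fm R^{b_{i-1}(N)}$. Applying $M\otimes_R-$ and using the dimension-shift isomorphism $\Tor_1(M,N_{i-1})\cong\Tor_i(M,N)$, the hypothesis $\Tor_i(M,N)=0$ produces a short exact sequence
\[
0\to M\otimes N_i\xra{\ f\ }M^{b_{i-1}(N)}\to M\otimes N_{i-1}\to 0
\]
in which, since the inclusion of $N_i$ landed in $\fm R^{b_{i-1}(N)}$, the image of $f$ is contained in $\fm M^{b_{i-1}(N)}$. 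Because $N$ is non-free it has infinite projective dimension, so every $b_j(N)$ is positive; and because $M\neq 0$ while $N_{i-1},N_i\neq 0$, the modules $M\otimes N_{i-1}$ and $M\otimes N_i$ are nonzero, so all the $\gamma_I$'s below are defined.

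For (1) I would compute lengths along the second sequence. The one place $I$-freeness enters is the identity $\lambda\!\big((M\otimes N_k)/I(M\otimes N_k)\big)=b_k(N)\,\lambda(M/IM)$ for $k=i-1,i$: since $N_k$ is $I$-free with $b_0(N_k)=b_k(N)$, associativity of tensor gives $(M\otimes N_k)\otimes_R R/I\cong M\otimes_R(R/I)^{b_k(N)}\cong(M/IM)^{b_k(N)}$. Substituting this, together with \eqref{length} applied to $M\otimes N_{i-1}$, to $M\otimes N_i$, and to $M$, into the additive relation $\lambda(M\otimes N_i)+\lambda(M\otimes N_{i-1})=b_{i-1}(N)\,\lambda(M)$ and cancelling $\lambda(M/IM)\neq 0$, I obtain
\[
b_i(N)\big(\gamma_I(M\otimes N_i)+1\big)=b_{i-1}(N)\big(\gamma_I(M)-\gamma_I(M\otimes N_{i-1})\big),
\]
which is the asserted equality. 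The bound $\le\gamma_I(M)$ then follows at once, since $\gamma_I(M\otimes N_i),\gamma_I(M\otimes N_{i-1})\ge 0$ force $\gamma_I(M)-\gamma_I(M\otimes N_{i-1})\le\gamma_I(M)\big(\gamma_I(M\otimes N_i)+1\big)$.

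For (2) and (3) it remains to kill the correction terms. If $\fm IM=0$, then $I\cdot\fm M^{b_{i-1}(N)}=0$, and since $f$ is injective with image in $\fm M^{b_{i-1}(N)}$ we get $I(M\otimes N_i)=0$, i.e. $\gamma_I(M\otimes N_i)=0$; plugging this into (1) gives (2). For (3) I would repeat the construction of the first paragraph with $i$ replaced by $i-1$: then $\Tor_{i-1}(M,N)=0$ gives an injection $M\otimes N_{i-1}\hookrightarrow M^{b_{i-2}(N)}$ with image in $\fm M^{b_{i-2}(N)}$, so $\fm IM=0$ again yields $\gamma_I(M\otimes N_{i-1})=0$, and (2) collapses to $b_i(N)/b_{i-1}(N)=\gamma_I(M)$. (This uses $i\ge 2$; when $i=1$ the hypothesis $\Tor_0(M,N)=M\otimes N=0$ is impossible for nonzero finite modules over a local ring, so (3) holds vacuously.)

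The one genuinely delicate point is the length translation in the second paragraph: without $I$-freeness of $N_{i-1}$ and $N_i$ one only has the inequality $\lambda\!\big((M\otimes N_k)/I(M\otimes N_k)\big)\le b_k(N)\lambda(M/IM)$ coming from \ref{property}(1), which would degrade the exact identities to mere inequalities. That hypothesis is precisely what lets the proof of \cite[1.4]{huneke2014vanishing} (the case $I=\fm$) go through unchanged; everything else is the standard long exact sequence of $\Tor$, additivity of length, and the minimality of the resolution.
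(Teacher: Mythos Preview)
Your proof is correct and follows essentially the same route as the paper's: both tensor the syzygy short exact sequence with $M$, use $\Tor_i(M,N)=0$ to keep it exact, convert lengths via \eqref{length} and the $I$-freeness identity $\lambda\big((M\otimes N_k)/I(M\otimes N_k)\big)=b_k(N)\,\lambda(M/IM)$, and then kill $\gamma_I(M\otimes N_i)$ and $\gamma_I(M\otimes N_{i-1})$ in (2) and (3) using minimality of the resolution together with $\fm IM=0$. Your write-up is in fact slightly more careful than the paper's (you note that $M\otimes N_k\ne 0$ so the $\gamma_I$'s are defined, you justify the inequality in (1), and you handle the vacuous case $i=1$ in (3)).
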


\begin{proof}
    (1). Since $\Tor_i(M,N) = 0$, we have a short exact sequence
    \[
     0 \rightarrow M \otimes N_i \rightarrow M \otimes R^{b_{i-1}} \rightarrow M \otimes N_{i-1} \rightarrow 0
    \]
    and hence 
    \[
    \lambda(M\otimes R^{b_{i-1}}) - \lambda(M\otimes N_{i-1}) = \lambda(M\otimes N_i).
    \]
    We can rewrite the equality as 

    \begin{equation}
    \label{gammaIM}
    \begin{aligned}
    & (\gamma_I(M) + 1)\lambda(M/IM)b_{i-1}(N) \\
    -& (\gamma_I(M\otimes N_{i-1}) + 1)\lambda(M \otimes N_{i-1} \otimes R/I)\\
    =& (\gamma_I(M\otimes N_{i}) + 1)\lambda(M \otimes N_{i} \otimes R/I).
    \end{aligned}
   \end{equation}
    \\
    Notice that for $j = i-1, i$, since $N_j \otimes R/I$ is free over $R/I$, we have $\lambda(M \otimes N_j \otimes R/I) = \lambda(M/IM)b_j(N)$. Hence we can rewrite \ref{gammaIM} as 
    \[
     [\gamma_I(M)  - \gamma_I(M\otimes N_{i-1})]b_{i-1}(N)  = [\gamma_I(M\otimes N_{i}) + 1]b_i(N).
    \]
    (2) Notice that $M \otimes N_i$ lies in $\fm(M \otimes R^{b_{i-1}})$, and therefore by assumption, $I(M \otimes N_i) \subset \fm I(M \otimes R^{b_{i-1}}) = 0 $. As a result, $\gamma_I(M\otimes N_{i}) = 0$. \\
    (3) Since $\Tor_{i-1}(M,N) = 0$, we have $M \otimes N_{i-1}$ lies in $\fm(M \otimes R^{b_{i-2}})$ and $\gamma_I(M\otimes N_{i}) = 0$.
    \end{proof}

Next we give an example of non-free modules $M,N$ and an ideal $I$ satisfying the hypotheses of \ref{bettiandgamma}. It shows that there are non-trivial cases where $\gamma_I(M)$ gives a sharper upper bound for some Betti number of the module $N$ for $I$ different from $\fm$.

\begin{exam}
    Let $R = k[x,y]/(x^2,y^3,xy^2)$, $M = R/(y^2)$ and $N = R/(x)$. We have $\Tor_1(M,N) = 0$. We can also compute the first two terms of the resolution of $N$ as 
    \[
    R^2 \xrightarrow{(x,y^2)} R \xrightarrow{x} R \rightarrow N.
    \] 
     Now let $I =(x,y^2)$, and by the discussion in Notation \ref{IFree}, $N, N_1$ is free over $R/I$. We can also compute $\gamma_I(M) = \dfrac{\lambda(IM)}{\lambda(M/IM)} = \frac{2}{2} = 1$. Whereas $\gamma_{\fm}(M) = \dfrac{\fm M}{b_0(M)} =3$. Applying Lemma \ref{bettiandgamma}, we have 
     \[
      b_1(N) = 1 \leq \gamma_I(M) b_0(N) =b_0(N) = 1
     \]
     whereas 
     \[
     b_1(N) = 1 \leq \gamma(M) b_0(N) =3b_0(N) = 3.
     \]  
\end{exam}
 The conditions $\Tor_{i-1}(M,N)= \Tor_{i}(M,N)=0$ for some $i>0$, $N_{i-1}$ and $N_i$ are $I$-free, and $\fm IM = 0$ implies $\dfrac{b_i(N)}{b_{i-1}(N)} = \gamma_I(M)$. This suggests a converse question: if a certain ratio of consecutive Betti numbers of $N$ meets the $\gamma_I(M)$-bound, does that force $\Tor_i(M,N)=0$? The following proposition gives an affirmative answer. 

\begin{prop}
    \label{Prop:gammaandTor}
    Let $M$ be nonzero and $N$ be non-free. Assume for some $i>0$, $N_{i-1},N_i$ are $I$-free. Then we can conclude $\Tor_i(M,N) = 0$ under either of the following two conditions:
    \begin{enumerate}
        
        \item   $\dfrac{b_i(N)}{b_{i-1}(N)} \leq \dfrac{\gamma_I(M)-\gamma_I(M\otimes N_{i-1})}{\gamma_I(M\otimes N_i)+1}$.
        \item  The $R$-modules $M\otimes N_{i-1}$ and $M\otimes N_i$ are annihilated by the ideal $I$ and $\dfrac{b_i(N)}{b_{i-1}(N)} \leq \gamma_I(M)$.
    \end{enumerate}
     
\end{prop}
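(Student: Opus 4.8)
The plan is to run the argument of Lemma~\ref{bettiandgamma} in reverse. Start from a minimal free resolution of $N$ and break it into short exact sequences via the syzygies $N_{i-1}, N_i$. Tensoring the presentation $R^{b_i(N)} \to R^{b_{i-1}(N)} \to N_{i-1} \to 0$ with $M$ always gives a right-exact sequence, so without any vanishing hypothesis we have a surjection $M\otimes R^{b_{i-1}(N)} \twoheadrightarrow M\otimes N_{i-1}$ whose kernel is the image of $M\otimes N_i$ in $M\otimes R^{b_{i-1}(N)}$; this image is $(M\otimes N_i)/\Tor_i(M,N)$. Hence
\[
\lambda(M\otimes R^{b_{i-1}(N)}) - \lambda(M\otimes N_{i-1}) = \lambda(M\otimes N_i) - \lambda(\Tor_i(M,N)).
\]
Now convert each length on the left side into $\gamma_I$-form exactly as in the proof of Lemma~\ref{bettiandgamma}(1): since $N_{i-1}$ and $N_i$ are $I$-free, $\lambda(M\otimes N_j\otimes R/I)=\lambda(M/IM)\,b_j(N)$ for $j=i-1,i$, and $\lambda(M\otimes N_j)=(\gamma_I(M\otimes N_j)+1)\lambda(M/IM)b_j(N)$. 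Dividing through by $\lambda(M/IM)$ yields the identity
\[
\big[\gamma_I(M)-\gamma_I(M\otimes N_{i-1})\big]b_{i-1}(N) \;=\; \big[\gamma_I(M\otimes N_i)+1\big]b_i(N) \;-\; \frac{\lambda(\Tor_i(M,N))}{\lambda(M/IM)}.
\]
(One should double-check that $M\otimes N_i$ is $I$-free here, or more simply rephrase so the needed equality $\lambda(M\otimes N_i\otimes R/I)=\lambda(M/IM)b_i(N)$ is only invoked where justified; in any case the displayed identity holds.)

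For part~(1): the hypothesis $\dfrac{b_i(N)}{b_{i-1}(N)} \le \dfrac{\gamma_I(M)-\gamma_I(M\otimes N_{i-1})}{\gamma_I(M\otimes N_i)+1}$ rearranges to $[\gamma_I(M\otimes N_i)+1]b_i(N) \le [\gamma_I(M)-\gamma_I(M\otimes N_{i-1})]b_{i-1}(N)$. Comparing with the displayed identity forces $\lambda(\Tor_i(M,N))\le 0$, hence $\Tor_i(M,N)=0$. For part~(2): when $I$ annihilates $M\otimes N_{i-1}$ and $M\otimes N_i$ we get $\gamma_I(M\otimes N_{i-1})=\gamma_I(M\otimes N_i)=0$, so the identity collapses to $\gamma_I(M)\,b_{i-1}(N) = b_i(N) - \lambda(\Tor_i(M,N))/\lambda(M/IM)$, wait --- more carefully $\gamma_I(M)b_{i-1}(N) - b_i(N) = -\lambda(\Tor_i(M,N))/\lambda(M/IM)$, and the assumption $b_i(N)\le \gamma_I(M)b_{i-1}(N)$ again forces $\lambda(\Tor_i(M,N))=0$.

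The only real subtlety, and the step I expect to need the most care, is the bookkeeping with $\Tor_i(M,N)$ inside the length count: one must correctly identify the kernel of $M\otimes R^{b_{i-1}(N)}\to M\otimes N_{i-1}$ as the quotient of $M\otimes N_i$ by the image of $\Tor_i(M,N)\subseteq M\otimes N_i$ under the connecting map, so that the extra term is exactly $\lambda(\Tor_i(M,N))$ with the correct sign. The rest is the same length-to-$\gamma_I$ translation already performed in Lemma~\ref{bettiandgamma}, specialized by noting that the inequality hypotheses are precisely the ones that make the nonnegative quantity $\lambda(\Tor_i(M,N))$ vanish.
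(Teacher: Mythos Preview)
Your proposal is correct and essentially identical to the paper's proof: both tensor the short exact sequence $0\to N_i\to R^{b_{i-1}(N)}\to N_{i-1}\to 0$ with $M$, keep the $\Tor$-term in the resulting four-term exact sequence, convert lengths to $\gamma_I$-form using $I$-freeness of $N_{i-1}$ and $N_i$, and then observe that the inequality hypotheses force the nonnegative quantity $\lambda(\Tor_i(M,N))$ to vanish. Your parenthetical worry is unnecessary: the equality $\lambda(M\otimes N_j\otimes R/I)=\lambda(M/IM)\,b_j(N)$ requires only that $N_j$ is $I$-free (so $N_j/IN_j\cong(R/I)^{b_j(N)}$ and hence $M\otimes_R N_j\otimes_R R/I\cong (M/IM)^{b_j(N)}$), not that $M\otimes N_j$ itself is $I$-free.
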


\begin{proof}
    (1)Apply $-\otimes M$ to the short exact sequence $0 \rightarrow N_{i} \rightarrow R^{b_{i-1}(N)} \rightarrow N_{i-1} \rightarrow 0$. We have 
    \[
    0 \rightarrow \Tor_{1}(M,N_{i-1}) \rightarrow M\otimes N_{i} \rightarrow M\otimes R^{b_{i-1}(N)} \rightarrow M\otimes N_{i-1} \rightarrow 0.
    \]

    Applying the length formula we have 
    \[
    \lambda(M \otimes N_{i}) = b_{i-1}(N)\lambda(M) - \lambda(M\otimes N_{i-1}) + \lambda(\Tor_{1}(M,N_{i-1})).
    \]
    Now we have 
    
    \begin{equation*}
    \begin{aligned}
    &(\gamma_I(M \otimes N_{i}) + 1 )\lambda(M \otimes N_{i}\otimes R/I)  \\
    = & b_{i-1}(N)(\gamma_I(M)+1)\lambda(M/IM)\\
    - & (\gamma_I(M\otimes N_{i-1})+1)\lambda(M\otimes N_{i-1}\otimes R/I) \\
     + & \lambda(\Tor_{1}(M,N_{i-1})).
    \end{aligned}
    \end{equation*}
    
    Since $N_{i-1}$ and $N_i$ are $I$-free, by the same argument as in the proof of \ref{bettiandgamma}, we obtain

     \begin{equation*}
    \begin{aligned}
    &(\gamma_I(M \otimes N_{i}) + 1 )b_{i}(N)\lambda(M/IM) \\
    =  &  b_{i-1}(N)(\gamma_I(M)+1)\lambda(M/IM) \\
    - & (\gamma_I(M\otimes N_{i-1})+1)b_{i-1}(N)\lambda(M/IM)  \\    
    + & \lambda(\Tor_{1}(M,N_{i-1})).
    \end{aligned}
    \end{equation*}

   Then since $M$ is nonzero, by Nakayama's Lemma we have $\lambda(M/IM)$ is nonzero and hence
     \begin{equation*}
    \begin{aligned}
    &(\gamma_I(M \otimes N_{i}) + 1 )b_{i}(N) \\
    = & (\gamma_I(M) - \gamma_I(M\otimes N_i))b_{i-1}(N)\\
    + & \dfrac{\lambda(\Tor_{1}(M,N_{i-1}))}{\lambda(M/IM)}.
    \end{aligned}
    \end{equation*}
    Dividing both sides of the equality by $\gamma_I(M\otimes N_i)+1)b_{i-1}(N)$, we have
     \begin{equation}
     \label{justonetime}
    \begin{aligned}
    \dfrac{b_i(N)}{b_{i-1}(N)} &= \dfrac{\gamma_I(M)-\gamma_I(M\otimes N_{i-1})}{\gamma_I(M\otimes N_i)+1} +\frac{\lambda(\Tor_1(M,N_{i-1})}{\lambda(M/IM)b_{i-1}(N)(\gamma_I(M\otimes N_i) +1)} \\
    &\geq \dfrac{\gamma_I(M)-\gamma_I(M\otimes N_{i-1})}{\gamma_I(M\otimes N_i)+1}.
      \end{aligned}
    \end{equation}
    Now if $ \dfrac{b_i(N)}{b_{i-1}(N)} \leq \dfrac{\gamma_I(M)-\gamma_I(M\otimes N_{i-1})}{\gamma_I(M\otimes N_i)+1}$, then we can conclude $\Tor_1(M,N_{i-1}) = \Tor_i(M,N) = 0$. \\
    (2) By assumption we have $\gamma_I(M\otimes N_{i-1}) = \gamma_I(M\otimes N_{i}) = 0$ and therefore, we have $ \dfrac{b_i(N)}{b_{i-1}(N)} \geq \gamma_I(M)$ from Equation \ref{justonetime}. Whence we can conclude $\Tor_i(M,N) = 0$.
\end{proof}

\begin{cor}
    Let $M$ be nonzero and $N$ be  non-free such that for some $i> 0$, $\Tor_i(M,N) = 0$ and $N_{i-1},N_i$ are $I$-free. If for some integer $j>0$, $M\otimes N_{j-1}$ and $M\otimes N_j$ are annihilated by the ideal $I$ and $N_{j-1},N_j$ are $I$-free, then $\Tor_j(M,N) = 0$.
\end{cor}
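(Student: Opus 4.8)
The plan is to reduce the statement to Proposition \ref{Prop:gammaandTor}(2) applied at the index $j$. That proposition gives $\Tor_j(M,N)=0$ as soon as (a) $M\otimes N_{j-1}$ and $M\otimes N_j$ are annihilated by $I$, (b) $N_{j-1}$ and $N_j$ are $I$-free, and (c) $b_j(N)/b_{j-1}(N)\le\gamma_I(M)$. Hypotheses (a) and (b) are precisely what is imposed at the index $j$, so the whole content of the corollary is the verification of the numerical inequality (c), and the hypothesis placed at the index $i$ is there to supply it.

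To set up the comparison I would first record the two estimates available. Since $\Tor_i(M,N)=0$ and $N_{i-1},N_i$ are $I$-free, Lemma \ref{bettiandgamma}(1) gives
\[
\frac{b_i(N)}{b_{i-1}(N)}=\frac{\gamma_I(M)-\gamma_I(M\otimes N_{i-1})}{\gamma_I(M\otimes N_i)+1}\le\gamma_I(M).
\]
On the other hand, the identity derived inside the proof of Proposition \ref{Prop:gammaandTor}(1) (Equation \ref{justonetime}) applies verbatim at the index $j$ because $N_{j-1},N_j$ are $I$-free, and hypothesis (a) forces $\gamma_I(M\otimes N_{j-1})=\gamma_I(M\otimes N_j)=0$, so that
\[
\frac{b_j(N)}{b_{j-1}(N)}=\gamma_I(M)+\frac{\lambda(\Tor_j(M,N))}{\lambda(M/IM)\,b_{j-1}(N)}\ge\gamma_I(M),
\]
with equality if and only if $\Tor_j(M,N)=0$. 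Hence it is enough to establish the reverse inequality $b_j(N)/b_{j-1}(N)\le\gamma_I(M)$: then the two displays collapse to $b_j(N)/b_{j-1}(N)=\gamma_I(M)$, condition (c) holds, and Proposition \ref{Prop:gammaandTor}(2) finishes the proof.

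The heart of the matter, and the step I expect to be the real obstacle, is therefore transporting the upper bound from the single position $i$, where $b_i(N)\le\gamma_I(M)\,b_{i-1}(N)$ is known, to the position $j$. The plan is to propagate $b_n(N)\le\gamma_I(M)\,b_{n-1}(N)$ along the minimal free resolution of $N$: replace $N$ by the syzygy $N_{i-1}$, so that the hypothesis at $i$ becomes the vanishing $\Tor_1(M,N_{i-1})=0$ with $N_{i-1},N_i$ $I$-free, and then move one syzygy at a time toward the index $j$, at each step reading off the one-step identity and using the $I$-freeness of the syzygy in question to keep the length bookkeeping clean. The delicate point is precisely whether the $I$-freeness and the annihilation conditions persist along enough of the resolution to keep this iteration alive — equivalently, whether a single vanishing $\Tor_i(M,N)=0$ already forces the growth of $b_\bullet(N)$ to stay within the $\gamma_I(M)$-bound globally, in the spirit of the Betti-growth comparisons of \cite{jorgensen2007growth} and \cite{huneke2014vanishing} — and that is where I would concentrate the work; once this bound is in hand the corollary follows from the chain of (in)equalities above.
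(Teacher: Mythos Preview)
Your reduction is precisely the paper's argument: invoke Lemma~\ref{bettiandgamma}(1) at index $i$ to get $b_i(N)/b_{i-1}(N)\le\gamma_I(M)$, then feed this into Proposition~\ref{Prop:gammaandTor}(2) at index $j$. The paper's proof is literally those two sentences and nothing more; it does not attempt any transport from $i$ to $j$.

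The propagation scheme you sketch for that transport, however, is a genuine gap and cannot be closed under the stated hypotheses. The corollary assumes $I$-freeness only for $N_{i-1},N_i,N_{j-1},N_j$ and $I$-annihilation only for $M\otimes N_{j-1},M\otimes N_j$; nothing whatsoever is imposed on the intermediate syzygies, so neither Lemma~\ref{bettiandgamma} nor Equation~\eqref{justonetime} can be iterated between positions $i$ and $j$. And a single vanishing $\Tor_i(M,N)=0$ certainly does not force the Betti ratio to stay below $\gamma_I(M)$ at an unrelated index $j$ in general. So the step you correctly flagged as ``the real obstacle'' is exactly that --- an obstacle neither you nor the paper actually clears. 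As written, the two-line proof in the paper shows only that $b_i(N)/b_{i-1}(N)\le\gamma_I(M)$ and then applies Proposition~\ref{Prop:gammaandTor}(2), which needs the inequality at index $j$; either an additional hypothesis linking $i$ and $j$ (for instance $b_j(N)/b_{j-1}(N)\le b_i(N)/b_{i-1}(N)$) has been silently dropped, or the corollary claims more than the given argument establishes.
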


\begin{proof}
    By Lemma \ref{bettiandgamma}(a), \[ \dfrac{b_i(N)}{b_{i-1}(N)} = \dfrac{\gamma_I(M)-\gamma_I(M\otimes N_{i-1})}{\gamma_I(M\otimes N_i)+1} \leq \gamma_I(M).\] Now apply Proposition \ref{Prop:gammaandTor}(b), we have the desired result.
\end{proof}

Next we establish additional properties of $\gamma_I(M)$ assuming certain $\Tor$ vanishings.

\begin{lemma}
\label{one}
Let $M$, $N$ be $R$-modules such that $M$ is nonzero and $N$ is non-free.
\begin{enumerate}
\item If $\Tor_i(M,N) = 0$ and $N_i$ is $I$-free for $i \in [1,b_0(N)+1]$, then $\gamma_I(M) \geq 1$.
\item If $\fm IM = 0$, $\Tor_i(M,N) = 0$, and $N_i$ is $I$-free for $i \in [1,\log_2b_1(N)+1]$, then $\gamma_I(M)$ is an integer.
\end{enumerate}
\end{lemma}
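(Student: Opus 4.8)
The plan is to prove both parts by contradiction, relying on two facts: over the Artinian local ring $R$ a non-free module has infinite projective dimension (by Auslander--Buchsbaum), so $b_i(N)\ge 1$ for every $i\ge 0$; and Lemma \ref{bettiandgamma} converts the vanishing of $\Tor$ into sharp relations among consecutive Betti numbers once the relevant syzygies are $I$-free.

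For (1), I would suppose $\gamma_I(M)<1$ and apply Lemma \ref{bettiandgamma}(1) at each index of the range: it gives $b_i(N)\le \gamma_I(M)\,b_{i-1}(N)<b_{i-1}(N)$, which, Betti numbers being positive integers, improves to $b_i(N)\le b_{i-1}(N)-1$. Descending through the whole range forces $b_{b_0(N)+1}(N)\le b_0(N)-(b_0(N)+1)<0$, contradicting $b_{b_0(N)+1}(N)\ge 1$; hence $\gamma_I(M)\ge 1$. The one delicate spot is that invoking Lemma \ref{bettiandgamma}(1) at the bottom index needs $N=N_0$ to be $I$-free, so the cascade should be understood as running over indices for which both $N_{i-1}$ and $N_i$ are $I$-free (in the intended reading, $N$ is $I$-free as well).

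For (2), the extra hypothesis $\fm IM=0$ lets me replace Lemma \ref{bettiandgamma}(1) by Lemma \ref{bettiandgamma}(3) along the range, so $b_i(N)/b_{i-1}(N)=\gamma_I(M)$ there and hence $b_k(N)=\gamma_I(M)^{k-1}b_1(N)$, where $k$ is the largest index. Since $b_2(N)\ne 0$ we have $\gamma_I(M)\ne 0$; writing $\gamma_I(M)=p/q$ in lowest terms and assuming $q\ge 2$, integrality of $b_k(N)=p^{k-1}b_1(N)/q^{k-1}$ together with $\gcd(p,q)=1$ forces $q^{k-1}\mid b_1(N)$, hence $2^{k-1}\le b_1(N)$, i.e.\ $k-1\le\log_2 b_1(N)$ --- contradicting the calibration $k-1>\log_2 b_1(N)$ that the range is designed to guarantee. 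So $q=1$ and $\gamma_I(M)\in\mathbb Z$.

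The substantive content is light; the main care goes into the index bookkeeping. In (1) one must confirm Lemma \ref{bettiandgamma}(1) is legitimately available at the bottom of the range (equivalently, that $N$ itself is $I$-free). In (2) the $q$-adic valuation count is clean, but one must be sure the top index $k$ genuinely satisfies $k-1>\log_2 b_1(N)$; this is automatic unless $b_1(N)$ is a power of $2$, in which case $\log_2 b_1(N)+1$ is an integer and one extra vanishing $\Tor$ is really needed, so the interval $[1,\log_2 b_1(N)+1]$ should be read with a ceiling (or stated as $[1,\lfloor\log_2 b_1(N)\rfloor+2]$). Everything else reduces to Lemma \ref{bettiandgamma} plus the observation that a strictly monotone sequence of positive integers cannot be long.
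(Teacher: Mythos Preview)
Your proposal is correct and follows essentially the same route as the paper: both parts argue by contradiction, invoking Lemma~\ref{bettiandgamma} to control consecutive Betti numbers, then using the integer-decrement cascade in (1) and the lowest-terms divisibility argument in (2). The boundary issues you flag---needing $N_0$ to be $I$-free at the bottom of the range in (1), and the ceiling interpretation of $\log_2 b_1(N)+1$ in (2)---are genuine and are glossed over in the paper's proof as well.
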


The proof of the lemma involves the same idea as in \cite[1.6]{huneke2014vanishing}.
\begin{proof}
  (1) Assume that $\gamma(M)<1$. By Lemma \ref{bettiandgamma}(1) we have then
      \[
b_i(N)\leq\gamma(M) b_{i-1}(N)<b_{i-1}(N)\quad\text{for all}\quad
i\in[1,b_0(N)].
\]
in other words we have 
\[
b_i(N) \leq  b_{i-1}(N) -1
\]
for all $i \in[1,b_0(N)]$. By induction, we have
  $b_{b_0(N)}(N) \leq 0$ and we conclude that $b_{b_0(N)}(N)=0$, hence $N$ has
  finite projective dimension and it is thus free, contradicting the
  hypothesis.

 (2) Using Lemma \ref{bettiandgamma}(3) we have:
\[
b_{i+1}(N)=(\gamma(M))^ib_1(N)\quad\text{ for all}\quad i\in [1,\log _2
b_1(N)+1].
\]

Let $u,v$ be relatively prime positive integers such that
$\gamma(M)=uv^{-1}$. It follows that $v^i$ divides $b_1(N)$ for all
$i\in [1,\log _2 b_1(N)+1]$. If $v\ge 2$, then $b_1(N)\ge
2^i$ for all such $i$. In particular, $b_1(N) \ge 2^{\log _2 b_1(N)+1} = b_1(N) + 1$, a contradiction.
\end{proof}

Using the previous result, we can show that for two finite $I$-free modules $M$ and $N$ in certain rings, when $\Tor_i(M,N) = 0$ for $i$ large enough, then one of them has to be free.

Let $e$ denote $b_0(\fm)$, $h$ denote $b_0(I)$, and $t$ be the Loewy length of the ring $R$. 
\begin{prop}
        \label{prop2}
        Assume $(R,\fm)$ is a local ring with an ideal $I$ such that $\lambda(\fm^2) + 2 - b_0(I) < e-1$ and $b_0(I) \leq e$. Let $M,N$ be finite $R$-modules such that $\fm IM = 0$. If for all $i \gg 0$, $\Tor_i(M,N) = 0$ and $N_i$ are $I$-free, then $M$ or $N$ is free.
    \end{prop}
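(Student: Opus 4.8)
The plan is to argue by contradiction. If $M=0$ there is nothing to prove, so assume $M\neq 0$ and that $N$ is not free. Since $R$ is Artinian, a non-free finite module has infinite projective dimension, so every syzygy of $N$ is non-free and non-zero; replacing $N$ by a sufficiently high syzygy I may therefore assume $\Tor_i(M,N)=0$ for \emph{all} $i\ge 1$ and that $N$ together with all of its syzygies $N_i$ is $I$-free, with $N$ still non-free. Note also $I^2M\subseteq\fm IM=0$.

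First I would collect what Section \ref{Betti numbers} already provides. By Lemma \ref{one}, $c:=\gamma_I(M)$ is a positive integer, and by Property \ref{property}(3) (applicable since $I^2M=0$) one has $c\le b_0(I)\le e$. By Lemma \ref{bettiandgamma}(3), $b_{i+1}(N)=c\,b_i(N)$ for all $i\gg 0$, so the Betti numbers of $N$ grow like $c^{\,i}$. Applying Lemma \ref{bettiandgamma}(1) with $M$ replaced by $R$ (so $\Tor_i(R,N)=0$ automatically and $R\otimes N_j=N_j$) yields, for $i\gg 0$, the recursion $\gamma_I(N_{i-1})+c\,\gamma_I(N_i)=\gamma_I(R)-c$; together with $0\le\gamma_I(N_j)\le\gamma_I(R)$ (Property \ref{property}(2)) this forces $c\le\gamma_I(R)$ and, when $c\ge 2$, forces $\gamma_I(N_i)\to(\gamma_I(R)-c)/(c+1)$ as $i\to\infty$. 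The case $\fm^2=0$ is disposed of immediately: then $\fm N_{i+1}\subseteq\fm^2R^{b_i(N)}=0$, so each syzygy of $N$ is a $k$-vector space and $b_{i+2}(N)=e\,b_{i+1}(N)$, whence $c=e$; but $c\le b_0(I)\le e$ then forces $b_0(I)=e$, so $I=\fm$, $M$ is $I$-free, and $\gamma_I(M)=e=\lambda(\fm)=\gamma_I(R)$, so $M$ is free by Property \ref{property}(2) --- contradiction. Hence $\fm^2\neq 0$ from now on.

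The crux is a lower bound on $c$ forced by the ring. From the syzygy sequence $0\to N_{i+1}\to R^{b_i(N)}\to N_i\to 0$ with $N_{i+1}\subseteq\fm R^{b_i(N)}$ one gets $\fm N_{i+1}\subseteq\fm^2R^{b_i(N)}$, so $\lambda(N_{i+1})=b_{i+1}(N)+\lambda(\fm N_{i+1})\le b_i(N)\bigl(\lambda(\fm^2)+c\bigr)$ for $i\gg 0$; on the other hand $\lambda(N_{i+1})=b_i(N)\lambda(R)-\lambda(N_i)$ and, since $N_i$ is $I$-free, $\lambda(N_i)=b_i(N)\,\lambda(R/I)\bigl(\gamma_I(N_i)+1\bigr)$ by equation \ref{lengthfree}. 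Comparing and dividing by $b_i(N)$ gives
\[
c\ \ge\ \lambda(I)-\lambda(\fm^2)-\lambda(R/I)\,\gamma_I(N_i)\qquad(i\gg 0).
\]
Letting $i\to\infty$, substituting the limit of $\gamma_I(N_i)$ from the recursion, and using $\lambda(R/I)\gamma_I(R)=\lambda(I)$ together with $\lambda(R)\ge 1+e+\lambda(\fm^2)$, this collapses after simplification to
\[
\lambda(\fm^2)\ \ge\ c\,(e-c)\qquad\text{whenever}\quad c\ge 2 .
\]

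Finally I would pit $\lambda(\fm^2)\ge c(e-c)$ against the hypotheses $c\le b_0(I)\le e$ and $\lambda(\fm^2)+2-b_0(I)<e-1$, i.e. $\lambda(\fm^2)\le e-4+b_0(I)$: this gives $c(e-c)\le e-4+b_0(I)\le 2e-4$, hence $b_0(I)\ge c(e-c)-e+4$, which together with $c\le b_0(I)\le e$ is only possible for $c=2$ (forcing $b_0(I)=e$ and $\lambda(\fm^2)=2e-4$) or for $c\in\{e-2,e-1,e\}$. These residual possibilities have to be ruled out one by one, using the recursion for $\gamma_I(N_i)$, the strict inequality $\gamma_I(N_j)<\gamma_I(R)$ for non-free $N_j$ (Property \ref{property}(2)), the fact --- read off from the proof of Property \ref{property}(3) --- that $c=b_0(I)$ makes the surjection $(M/IM)^{b_0(I)}\twoheadrightarrow IM$ an isomorphism, and the bound $b_0(I)\le e$; the remaining case $c=1$, where $b_i(N)$ is eventually constant, is treated by the same length estimate, which in that case yields $\lambda(\fm^2)\ge e-1$, again combined with the recursion. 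I expect this numerical endgame to be the main obstacle: the quadratic inequality $\lambda(\fm^2)\ge c(e-c)$ already does most of the work, but excising the borderline integer values of $c$ near $e$ and the value $c=1$ requires carefully coordinating these auxiliary facts --- everything preceding the endgame being routine bookkeeping with the lemmas of Section \ref{Betti numbers}.
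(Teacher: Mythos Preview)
Your argument diverges from the paper's at the decisive step. The paper does not manufacture a length estimate from scratch; it invokes the Gasharov--Peeva inequality \cite[2.2]{gasharov1990boundedness}, which for any non-free module $N$ over an Artinian local ring yields
\[
b_{i+2}(N)\ \ge\ e\,b_{i+1}(N)\;-\;a\,b_i(N),\qquad a=\lambda(\fm^2)+2-t\quad(i\gg 0),
\]
with $t$ the Loewy length. Substituting $b_{i+1}(N)=\gamma_I(M)\,b_i(N)$ from Lemma~\ref{bettiandgamma}(3) gives the quadratic constraint $\gamma_I(M)^2-e\,\gamma_I(M)+a\ge 0$. The hypothesis ensures $a<e-1$, so the roots satisfy $r_2<1<e-1<r_1$; since Lemma~\ref{one} makes $\gamma_I(M)$ an integer $\ge 1$ and Lemma~\ref{IMbetti}(1) gives $\gamma_I(M)\le b_0(I)-1\le e-1$ when $M$ is non-free, both alternatives are excluded and the proof closes in one stroke --- no case analysis.

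Your inequality $\lambda(\fm^2)\ge c(e-c)$ is exactly the quadratic $\gamma^2-e\gamma+\lambda(\fm^2)\ge 0$, i.e.\ the Gasharov--Peeva bound with the constant term weakened from $a=\lambda(\fm^2)+2-t$ to $\lambda(\fm^2)$ (you are effectively using only the single containment $\fm N_{i+1}\subseteq\fm^2R^{b_i(N)}$ rather than iterating it down the resolution). Because $\lambda(\fm^2)\ge a$, your roots are squeezed closer together and the forbidden interval for $c$ shrinks; this is precisely why you are left with the residual cases $c\in\{1,2,e-2,e-1,e\}$. Those cases are a genuine gap, not just bookkeeping: for instance $c=e-1$ forces $b_0(I)=e$ and then the hypothesis only says $\lambda(\fm^2)\le 2e-4$, which together with your bound $\lambda(\fm^2)\ge e-1$ is perfectly consistent for every $e\ge 3$; none of the tools you list (the recursion for $\gamma_I(N_i)$, Property~\ref{property}(2), or the equality case of Property~\ref{property}(3)) eliminates it. The missing idea is the sharper constant $2-t$, obtainable either by citing Gasharov--Peeva directly or by iterating your containment to $N_{i+j}\hookrightarrow\fm^{\,j}R^{b_i(N)}$ up to $j=t-1$.
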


    \begin{proof}
        Assume by contradiction that neither $M$ nor $N$ is free. Then by \cite[2.2]{gasharov1990boundedness} we have for $i\gg 0$
        \[
        b_{i+2}(N) \geq eb_{i+1}(N) - (\lambda(\fm^2)+2 - t)b_i(N).
        \]
       
        Denote $a = \lambda(\fm^2)+2 - t$. By Lemma \ref{bettiandgamma} we can rewrite the above as  
        \[
        \gamma_I(M)^2b_i(N) - e\gamma_I(M)b_i(N) + ab_i(N) \geq 0.
        \]
        Since $N$ is non-free, we have 
        \[
        \gamma_I(M)^2 - e\gamma_I(M) + a \geq 0.
        \]
        Denote $r_1 = \frac{e+ \sqrt{e^2 - 4a} }{2}$ and $r_2 = \frac{e - \sqrt{e^2 - 4a} }{2}$. In particular by assumption $e^2 - 4a > e^2 - 4(e-1)$ $r_1$ and $r_2$ are real roots. Hence $\gamma_I(M)\leq r_2 < 1$ or $\gamma_I(M) \geq r_1 > e \geq b_0(I)$. Then $\gamma_I(M) \geq 1$ by \ref{one}(1), $\gamma_I(M)$ is an integer by \ref{one}(2), and $\gamma_I(M) \leq h$ by \ref{IMbetti}(1). Hence $\gamma_I(M) \leq r_2 < 1$ and $\gamma_I(M) = 0$. This is a contradiction.
    \end{proof}

\begin{lemma}
    \label{IMbetti}
   Let $M$ be a nonzero module and $N$ be a non-free module. Assume $\fm IM =0$, $N,N_1,N_2$ are $I$-free and $\Tor_{1}(M,N) = \Tor_{2}(M,N) = 0$. Then 
    \begin{enumerate}
        
    \item $
    b_1(M) = (b_0(I)-\gamma_I(M))b_0(M)$.

    \item If in addition, $M,M_1$ are $I$-free, then $\lambda(IM_1) = (\lambda(\fm I)+b_0(I) - \lambda(R/I)b_0(I))b_0(M)$.
    \end{enumerate}
    \end{lemma}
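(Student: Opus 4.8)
The plan is to reduce the statement to two facts and then do a single length count. Fact (i): under the hypotheses one necessarily has $IM=\fm M$, hence $\fm^2M=0$ and $M/IM\cong k^{b_0(M)}$ with $k=R/\fm$. Fact (ii): for any $R$-modules $A,B$ with $\Tor_1^R(A,B)=0$ one has $b_0(A\otimes B)=b_0(A)b_0(B)$ and $b_1(A\otimes B)=b_1(A)b_0(B)+b_0(A)b_1(B)$. Fact (ii) I would get from the total complex $T$ of the tensor product of the minimal free resolutions of $A$ and $B$: all differentials of $T$ have entries in $\fm$, and $H_0(T)=A\otimes B$, $H_1(T)=\Tor_1(A,B)=0$; any complex of finite free modules with $\fm$-differentials and vanishing $H_1$ has degree-$0$ and degree-$1$ ranks equal to $b_0$ and $b_1$ of $H_0$, and for $T$ those ranks are $b_0(A)b_0(B)$ and $b_1(A)b_0(B)+b_0(A)b_1(B)$.

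The heart of the argument is Fact (i). Tensoring the minimal presentations $0\to N_1\to F_0\to N\to 0$ and $0\to N_2\to F_1\to N_1\to 0$ with $M$ and using $\Tor_1(M,N)=\Tor_2(M,N)=0$ gives short exact sequences
\[
0\to M\otimes N_1\to M^{b_0(N)}\to M\otimes N\to 0,\qquad 0\to M\otimes N_2\to M^{b_1(N)}\to M\otimes N_1\to 0.
\]
The image of $M\otimes N_j$ lies in $\fm$ times a free module (the transition matrices are those of a minimal resolution), so $\fm IM=0$ forces $I(M\otimes N_j)=0$ for $j=1,2$; since each $N_j$ is $I$-free, reducing modulo $I$ identifies $M\otimes N_j\cong(M/IM)\otimes_{R/I}(N_j/IN_j)\cong(M/IM)^{b_j(N)}$ (where $b_1(N_1)=b_2(N)$). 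Moreover, under these identifications the surjection $M^{b_1(N)}\to M\otimes N_1$ becomes coordinatewise reduction modulo $I$ followed by an automorphism of $(M/IM)^{b_1(N)}$, because $F_1\to N_1$ induces a surjective, hence bijective, endomorphism of $(R/I)^{b_1(N)}$; therefore its kernel is $(IM)^{b_1(N)}$, i.e. $M\otimes N_2\cong(IM)^{b_1(N)}$. Since $\fm IM=0$ makes $IM$ a $k$-vector space, comparing with $M\otimes N_2\cong(M/IM)^{b_2(N)}$ shows $(M/IM)^{b_2(N)}$ is annihilated by $\fm$; as $N$ is non-free over the Artinian local ring $R$, all its Betti numbers are positive, so $b_2(N)>0$ and $\fm(M/IM)=0$, that is $\fm M\subseteq IM\subseteq\fm M$. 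Hence $IM=\fm M$, $\lambda(M/IM)=b_0(M)$, and $\gamma_I(M)=\gamma_{\fm}(M)=\lambda(\fm M)/b_0(M)$.

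Part (1) now follows quickly. Applying Fact (ii) to $(A,B)=(M,N_1)$, which is legitimate since $\Tor_1(M,N_1)=\Tor_2(M,N)=0$, gives $b_1(M\otimes N_1)=b_1(M)b_1(N)+b_0(M)b_2(N)$. On the other hand, by Fact (i), $M\otimes N_1\cong(M/IM)^{b_1(N)}\cong k^{b_0(M)b_1(N)}$, so $b_1(M\otimes N_1)=b_0(M)b_1(N)\cdot b_1(k)=b_0(M)b_1(N)\,b_0(\fm)$. Equating and cancelling $b_1(N)\ne0$ yields
\[
b_1(M)=b_0(M)\Bigl(b_0(\fm)-\tfrac{b_2(N)}{b_1(N)}\Bigr)=\bigl(b_0(\fm)-\gamma_I(M)\bigr)b_0(M),
\]
the last step by Lemma~\ref{bettiandgamma}(3), which gives $b_2(N)/b_1(N)=\gamma_I(M)$. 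For part (2): if $M$ is also $I$-free, then $(R/I)^{b_0(M)}\cong M/IM=M/\fm M\cong k^{b_0(M)}$ is annihilated by $\fm$, forcing $\fm\subseteq I$, i.e. $I=\fm$. Then $\lambda(IM_1)=\lambda(\fm M_1)=\lambda(M_1)-b_1(M)$, with $\lambda(M_1)=b_0(M)\lambda(R)-\lambda(M)$, $\lambda(M)=b_0(M)\bigl(1+\gamma_{\fm}(M)\bigr)$, and $b_1(M)=\bigl(b_0(\fm)-\gamma_{\fm}(M)\bigr)b_0(M)$ by (1); substituting and using $\lambda(R)=1+b_0(\fm)+\lambda(\fm^2)$ collapses this to $\lambda(IM_1)=\lambda(\fm^2)b_0(M)$, which is the asserted value once one observes that $I=\fm$ makes $\lambda(\fm I)=\lambda(\fm^2)$, $b_0(I)=b_0(\fm)$, and $\lambda(R/I)=1$.

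I expect the main obstacle to be Fact (i), and inside it the identification $M\otimes N_2\cong(IM)^{b_1(N)}$ (equivalently $M\otimes N_2=\fm\cdot M^{b_1(N)}$): this is the step that turns the $\Tor_2$-vanishing into the rigidity $IM=\fm M$, and once it is available parts (1) and (2) are routine. Care is also needed with the bijection $F_1/IF_1\xrightarrow{\sim}N_1/IN_1$ invoked there, which rests on surjectivity of $F_1\to N_1$ together with $R/I$ having finite length.
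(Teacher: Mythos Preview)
Your Fact~(i) is valid: the identification $M\otimes N_2\cong(IM)^{b_1(N)}$ follows because the induced map $(R/I)^{b_1(N)}\to N_1/IN_1$ is a surjection between free $R/I$-modules of equal rank over an Artinian ring, hence an isomorphism; combined with $M\otimes N_2\cong(M/IM)^{b_2(N)}$, $\fm IM=0$, and $b_2(N)>0$, it forces $\fm(M/IM)=0$, i.e.\ $IM=\fm M$. But this leads you to $b_1(M)=\bigl(b_0(\fm)-\gamma_I(M)\bigr)b_0(M)$ in part~(1), whereas the lemma asserts $b_1(M)=\bigl(b_0(I)-\gamma_I(M)\bigr)b_0(M)$. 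Nothing in your argument gives $b_0(I)=b_0(\fm)$, so you have not established the stated identity for~(1); you have proved a different formula and not reconciled it with the one claimed.

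The paper's proof of~(1) takes another route: it identifies $M\otimes N_1$ with a direct sum of copies of $R/I$ (invoking that $M$ is $I$-free, which strictly speaking is listed only among the additional hypotheses of~(2)), so that $b_1(M\otimes N_1)=b_0(I)\,b_0(M)\,b_1(N)$; it then combines this with the K\"unneth-type identity $b_1(M\otimes N_1)=b_0(M)b_2(N)+b_1(M)b_1(N)$ and with $b_2(N)=\gamma_I(M)b_1(N)$ from Lemma~\ref{bettiandgamma}(3). It never isolates your Fact~(i) or the kernel description of $M\otimes N_2$. Your treatment of part~(2) is correct: once $M$ is $I$-free, your Fact~(i) indeed forces $I=\fm$, and your length computation then matches the stated value. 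The paper instead handles~(2) by a direct length count in terms of $\lambda(R/I)$, $b_0(I)$, $\lambda(\fm I)$, using~(1) and $\lambda(M)=b_0(M)\lambda(R/I)(\gamma_I(M)+1)$, without first collapsing to $I=\fm$.
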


    \begin{proof} (1) Since $\Tor_{1}(M,N) = \Tor_{2}(M,N) = 0$, we have 
        \[
        b_1(M\otimes N_1) = b_0(M)b_2(N) + b_1(M)b_1(N).
        \]
        Since $\Tor_1(M,N) = 0$, we have 
        \[
        I(M\otimes N_1) \subset I(\fm(M\otimes R^{b_0(N)})) = 0
        \] and therefore $M\otimes N_1 \otimes R/I \cong M\otimes N_1$. Moreover, $M$ and $N_1$ are $I$-free, so $M\otimes N_1$ must be isomorphic to copies of $R/I$. Hence $b_1(M\otimes N_1) = b_0(I) \cdot b_0(M)b_1(N)$. While on the other hand, by Lemma \ref{bettiandgamma}, we have $b_1(N_1) = b_2(N) = \gamma_I(M)b_1(N).$ Together we have 

        \begin{equation}
            \begin{split}
            b_0(I)  b_0(M)b_1(N) &= b_0(M)b_2(N) + b_1(M)b_1(N)  \\
                                 &= b_0(M)b_1(N)\gamma_I(M) + b_1(M)b_1(N)
        \end{split}
        \end{equation}
        and since $N$ is non-free,  
        \[
        b_1(M) = (b_0(I) -  \gamma_I(M) )b_0(M).
        \]
        (2) By the definition of $\gamma_I(M)$ and the virtue that $M/IM$ is free over $R/I$, we have 
        \[
        \lambda(M) = (\gamma_I(M) + 1)\lambda(M/IM) = (\gamma_I(M) + 1)\lambda(R/I)b_0(M).
        \]
        While looking at the short exact sequence $0 \rightarrow M_1 \rightarrow R^{b_0(M)} \rightarrow M \rightarrow 0$, we have
        \[
        \lambda(M_1) = \lambda(R)b_0(M) - \lambda(M).
        \]
        Now plugging the $\lambda(M)$ calculated above 
        \begin{equation}
        \begin{split}
        \lambda(M_1)& = (\lambda(\fm I) + b_0(I) + \lambda(R/I) - \gamma_I(M)\lambda(R/I) - \lambda(R/I))b_0(M)  \\
        &=(\lambda(\fm I) + b_0(I) - \gamma_I(M)\lambda(R/I)  )b_0(M). 
        \end{split}
        \end{equation}
        Then we combine what we calculate for $\lambda(M_1)$ and the assumption $M_1$ is $I$-free and by (a)
        \begin{equation}
        \begin{split}          
        \lambda(IM_1) = &\lambda(M_1) - \lambda(M_1/IM_1) \\
        =&  \lambda(M_1) - b_1(M)\lambda(R/I) \\
        =& (\lambda(\fm I) + b_0(I) - \gamma_I(M)\lambda(R/I)  )b_0(M) -(b_0(I) -  \gamma_I(M))b_0(M)\lambda(R/I)  \\
        =& (\lambda(\fm I)+b_0(I) - \lambda(R/I)b_0(I))b_0(M).
        \end{split}
        \end{equation}
        
    \end{proof}

\begin{lemma}
\label{sum}
Let $M$, $N$ be non-free $R$-modules with $\fm IM=\fm IN=0$. If $\Tor^R_2(M,N)=\Tor^R_1(M,N)=0$ and $M,M_1,M_2,N, N_1$ are $I$-free, then
$ \displaystyle \gamma_I(M)+\gamma_I(N)-\gamma_I({M\otimes_RN})=b_0(I)$.
 \end{lemma}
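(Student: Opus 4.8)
The plan is to turn the identity into a single length computation and then evaluate each length using $I$-freeness. The first remark is that a tensor product of $I$-free modules is again $I$-free: for any $R$-modules $P,Q$ one has $(P\otimes_R Q)\otimes_R R/I\cong (P/IP)\otimes_{R/I}(Q/IQ)$, so if $P/IP$ and $Q/IQ$ are $R/I$-free of ranks $b_0(P),b_0(Q)$, then $P\otimes_R Q$ is $I$-free with $b_0(P\otimes_R Q)=b_0(P)b_0(Q)$ and $\lambda\big((P\otimes Q)/I(P\otimes Q)\big)=b_0(P)b_0(Q)\lambda(R/I)$. Applying this to the pairs $(M,N)$ and $(M,N_1)$ shows that $M\otimes N$ and $M\otimes N_1$ are $I$-free; the modules $M$ and $N$ are $I$-free by hypothesis. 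Hence (\ref{lengthfree}) applies to each of $M$, $N$, and $M\otimes N$, and after substituting these into the claimed equality, clearing the common denominator $b_0(M)b_0(N)\lambda(R/I)$, and cancelling, the statement becomes the numerical assertion
\[
b_0(N)\lambda(M)+b_0(M)\lambda(N)-\lambda(M\otimes N)=(b_0(I)+1)\,b_0(M)b_0(N)\lambda(R/I).
\]

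Next I would invoke $\Tor_1(M,N)=0$. Tensoring $0\to N_1\to R^{b_0(N)}\to N\to 0$ with $M$ yields the exact sequence $0\to M\otimes N_1\to M^{b_0(N)}\to M\otimes N\to 0$, so $\lambda(M\otimes N)=b_0(N)\lambda(M)-\lambda(M\otimes N_1)$. Using (\ref{lengthfree}) for the $I$-free module $N$ in the form $b_0(M)\lambda(N)=(\gamma_I(N)+1)b_0(M)b_0(N)\lambda(R/I)$ and plugging both expressions into the display above, everything cancels except
\[
\lambda(M\otimes N_1)=(b_0(I)-\gamma_I(N))\,b_0(M)b_0(N)\lambda(R/I).
\]

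To compute the left-hand side I would first show $\gamma_I(M\otimes N_1)=0$: the exact sequence just written realizes $M\otimes N_1$ as a submodule of $M\otimes R^{b_0(N)}$ that lands inside $\fm(M\otimes R^{b_0(N)})$, since $N_1\subseteq\fm R^{b_0(N)}$; then $I(M\otimes N_1)\subseteq I\fm M^{b_0(N)}=(\fm IM)^{b_0(N)}=0$ by the hypothesis $\fm IM=0$. Since $N$ is non-free and $M$ is nonzero, $M\otimes N_1\neq 0$, so $\gamma_I(M\otimes N_1)=0$ is meaningful, and because $M\otimes N_1$ is $I$-free with $b_0(M\otimes N_1)=b_0(M)b_1(N)$, formula (\ref{lengthfree}) gives $\lambda(M\otimes N_1)=b_0(M)b_1(N)\lambda(R/I)$. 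It then remains to compute $b_1(N)$, which I would do by applying Lemma \ref{IMbetti}(1) with the roles of $M$ and $N$ interchanged --- this is legitimate since $\Tor$ is symmetric, $\fm IN=0$, and $M,M_1,M_2$ are $I$-free --- obtaining $b_1(N)=(b_0(I)-\gamma_I(N))b_0(N)$. Substituting this into $\lambda(M\otimes N_1)=b_0(M)b_1(N)\lambda(R/I)$ reproduces exactly the value required above, which completes the proof.

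I expect the only real obstacle to be bookkeeping rather than any new idea: one must verify that the length identity (\ref{lengthfree}) is applied only to modules actually shown to be $I$-free (namely $M$, $N$, $M\otimes N$, and $M\otimes N_1$), and that every hypothesis of Lemma \ref{IMbetti} genuinely survives the interchange of $M$ and $N$ --- in particular that it is $\fm IN=0$ (not $\fm IM=0$) and the $I$-freeness of $M_1,M_2$ (not $N_1,N_2$) that are needed for the swapped application. Once these points are checked, the remainder is a routine length manipulation.
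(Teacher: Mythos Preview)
Your proof is correct and takes essentially the same approach as the paper. The paper's argument is shorter: it simply equates the two expressions for a single Betti ratio coming from Lemma~\ref{IMbetti}(1) and from Lemma~\ref{bettiandgamma}(2) (with the roles of $M$ and $N$ switched), whereas your length computations in the first half amount to re-deriving the $i=1$ case of Lemma~\ref{bettiandgamma}(2) by hand before making the same swapped application of Lemma~\ref{IMbetti}(1).
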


\begin{proof}  Compare by Lemma \ref{IMbetti}(1)
\[
\frac{b_1(M)}{b_{0}(M)} = (b_0(I)-\gamma_I(M))
\]
and by Lemma \ref{bettiandgamma}(2) with $M$ and $N$ switched
\[
\frac{b_1(M)}{b_{0}(M)} = \gamma_I(N) - \gamma_I(M \otimes N).
\]
\end{proof}

\section{Vanishing of Tor and Freeness of A Module}
\label{Vanishing of Tor and Freeness of A Module}
The behavior of Betti numbers of finite $R$-modules for the local ring $(R,\fm)$ such that $\fm^3 = 0$ was
studied by Lescot \cite{LESCOT1985287}. We adapt some techniques from Lescot to a more general situation that there is a fixed ideal $I$ such that $\fm^2I = 0$.

        \begin{lemma}
        \label{purebetti}
        Let $M,N$ be finite modules such that $M$ is nonzero and $N$ is non-free. Assume $\fm IM = 0$. Let $i \geq 0$ be an integer such that $  M_{i},M_{i+1},N_{i},N_{i+1},N_{i+2}$ are $I$-free and $\Tor_{i+1}(M,N) = \Tor_{i+2}(M,N) =  0$, then 
        \begin{equation*}       
        \lambda(M_{i+1}/IM_{i+1}) =  b_0(I)\lambda(R/I)b_i(M) - \lambda(IM_i).       
        \end{equation*}
      
      \end{lemma}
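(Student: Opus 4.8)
The plan is to mimic the length-counting argument used for Lemma \ref{bettiandgamma} and Lemma \ref{IMbetti}, but now applied one step further back in the resolution of $M$, using the extra syzygy $M_{i+1}$. First I would record the consequence of the two $\Tor$-vanishings: since $\Tor_{i+1}(M,N)=\Tor_{i+2}(M,N)=0$, tensoring the minimal free resolution of $N$ with $M$ stays exact in the relevant spots, so from $0\to N_{i+2}\to R^{b_{i+1}(N)}\to N_{i+1}\to 0$ and $0\to N_{i+1}\to R^{b_i(N)}\to N_i\to 0$ we get short exact sequences $0\to M\otimes N_{i+2}\to M^{b_{i+1}(N)}\to M\otimes N_{i+1}\to 0$ and $0\to M\otimes N_{i+1}\to M^{b_i(N)}\to M\otimes N_i\to 0$. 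As in the proof of \ref{IMbetti}, the hypothesis $\fm IM=0$ forces $I(M\otimes N_{i+1})\subseteq I\fm(M\otimes R^{b_i(N)})=0$, so $M\otimes N_{i+1}$ is annihilated by $I$; combined with $M_i,N_i,N_{i+1}$ being $I$-free I would argue $M\otimes N_{i+1}$ is a direct sum of copies of $R/I$, hence $\lambda(M\otimes N_{i+1})=b_0(I)\,\lambda(R/I)\,b_i(M)\,$—wait, more precisely I need to count its minimal generators correctly, using that over $R/I$ the module $N_{i+1}\otimes R/I$ is free of rank $b_{i+1}(N)$ and $M\otimes R/I = M/IM$ is free over $R/I$ of rank $b_0(M)$; here the relevant rank is $b_i(M)$ because $M_{i+1}$ enters through the resolution of $M$, not $N$. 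I would fix the exact bookkeeping at this point rather than here.

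Next I would compute $\lambda(M\otimes N_{i+1})$ a second way, via the additivity of length along $0\to M\otimes N_{i+1}\to M^{b_i(N)}\to M\otimes N_i\to 0$, giving $\lambda(M\otimes N_{i+1})=b_i(N)\lambda(M)-\lambda(M\otimes N_i)$. Then I would expand each term using formula \eqref{length} (or \eqref{lengthfree} where $I$-freeness applies): $\lambda(M)=(\gamma_I(M)+1)\lambda(M/IM)$, and $\lambda(M\otimes N_i)=(\gamma_I(M\otimes N_i)+1)\lambda(M\otimes N_i\otimes R/I)=(\gamma_I(M\otimes N_i)+1)\lambda(M/IM)b_i(N)$ since $N_i$ is $I$-free. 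Equating the two expressions for $\lambda(M\otimes N_{i+1})$ and cancelling $\lambda(M/IM)$ (nonzero by Nakayama, as $M\neq 0$) should isolate $\gamma_I(M\otimes N_i)$, and then substituting back the already-established relation $b_{i+1}(N)/b_i(N)=\gamma_I(M)-\gamma_I(M\otimes N_i)$ from \ref{bettiandgamma}(2) lets me trade $\gamma_I(M\otimes N_i)$ for $\gamma_I(M)$ and the Betti ratio. The target quantity $\lambda(M_{i+1}/IM_{i+1})=b_{i+1}(M)\lambda(R/I)$ should then appear once I translate "number of generators of the $(i+1)$st syzygy of $M$" into length via the $I$-freeness of $M_{i+1}$, exactly as in \ref{IMbetti}(2); I would also need the identity $\lambda(IM_i)=\lambda(M_i)-b_i(M)\lambda(R/I)$ and the short exact sequence $0\to M_{i+1}\to R^{b_i(M)}\to M_i\to 0$ to relate $\lambda(M_{i+1})$ to $\lambda(M_i)$ and $\lambda(R)$.

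The main obstacle I anticipate is the clean identification of $\lambda(M\otimes N_{i+1})$ and of $b_0(M\otimes N_{i+1})$: one must be careful that $M\otimes N_{i+1}$ being $I$-annihilated does \emph{not} by itself make it $R/I$-free, so the argument "it is a sum of copies of $R/I$" must genuinely use that both $M$ and $N_{i+1}$ are $I$-free together with the fact that $M\otimes_R N_{i+1}=M/IM\otimes_{R/I}N_{i+1}/IN_{i+1}$ once $I$ kills the tensor product, i.e. a base-change identity $(M\otimes_R N_{i+1})\cong (M\otimes_R R/I)\otimes_{R/I}(N_{i+1}\otimes_R R/I)$ when $I(M\otimes_R N_{i+1})=0$. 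Getting this isomorphism and the resulting rank $b_0(M)b_{i+1}(N)$ straight — and then reconciling it with the appearance of $b_i(M)$ on the right-hand side of the claimed formula, which suggests the intended route actually runs through the resolution of $M$ tensored with $N$ rather than the resolution of $N$ tensored with $M$ — is where I would spend the most care; once the correct short exact sequence is in hand, the rest is the same length algebra already rehearsed in \ref{bettiandgamma} and \ref{IMbetti}.
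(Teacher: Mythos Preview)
Your proposal has a genuine gap: the main computation you describe never reaches the target. Equating two expressions for $\lambda(M\otimes N_{i+1})$ via the resolution of $N$ tensored with $M$ yields, after your cancellations, exactly $b_{i+1}(N)/b_i(N)=\gamma_I(M)-\gamma_I(M\otimes N_i)$, i.e.\ you have re-derived Lemma~\ref{bettiandgamma}(2). This is a statement about the Betti numbers of $N$ and contains no information about $\lambda(M_{i+1}/IM_{i+1})$ or $b_i(M)$. You correctly sense the mismatch, but your suggested fix---switching to the resolution of $M$ tensored with $N$---does not work: to conclude $I(N\otimes M_{i+1})=0$ from $N\otimes M_{i+1}\subseteq \fm N^{b_i(M)}$ you would need $\fm IN=0$, which is \emph{not} among the hypotheses (only $\fm IM=0$ is). So neither branch of your plan produces the quantity $b_i(M)$ that appears in the claimed formula.

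The paper's proof avoids tensor products with $N$ entirely at this stage. It is a pure length computation on the syzygy sequence $0\to M_{i+1}\to R^{b_i(M)}\to M_i\to 0$: write
\[
\lambda(M_{i+1}/IM_{i+1})=\lambda(R)\,b_i(M)-\lambda(M_i/IM_i)-\lambda(IM_i)-\lambda(IM_{i+1}),
\]
use $I$-freeness of $M_i$ for the second term, and---this is the missing idea---invoke Lemma~\ref{IMbetti}(2), applied with $M$ replaced by $M_i$ and $N$ by $N_i$, to evaluate $\lambda(IM_{i+1})=(\lambda(\fm I)+b_0(I)-\lambda(R/I)b_0(I))\,b_i(M)$. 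The $\Tor$-vanishing and the $I$-freeness of $N_i,N_{i+1},N_{i+2}$ are consumed entirely inside that one citation. Equivalently, one may cite Lemma~\ref{IMbetti}(1) for $b_{i+1}(M)=(b_0(I)-\gamma_I(M_i))\,b_i(M)$ and multiply by $\lambda(R/I)$, noting $\gamma_I(M_i)\,b_i(M)\,\lambda(R/I)=\lambda(IM_i)$. Either way, the step you are missing is to \emph{apply} Lemma~\ref{IMbetti} to $M_i$, not to re-prove its ingredients via $M\otimes N_{i+1}$.
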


    \begin{proof}
    Breaking down the length of modules via short exact sequences, we have 
       \begin{equation*}
        \begin{aligned}
       \lambda(M_{i+1}/IM_{i+1}) &= \lambda(M_{i+1})-\lambda(IM_{i+1})\\
        &=\lambda(R)b_i(M)  - \lambda(M_i) - \lambda(IM_{i+1})\\  
        &=\big(\lambda(\fm I) + b_0(I) + \lambda(R/I)\big)b_i(M)- \lambda(M_i/IM_i) -\lambda(IM_{i+1})- \lambda(IM_i).
        \end{aligned}
        \end{equation*}
      Now since $M_i$ is $I$-free and apply Lemma \ref{IMbetti}(b) to $\lambda(IM_{i+1})$ (here is why we need the syzygies of $M$ and $N$ to be $I$-free), we have the above 
        \begin{equation*}
        \begin{aligned}
        = &(\lambda(\fm I) + b_0(I) + \lambda(R/I))b_i(M) \\
        - &\lambda(R/I)b_i(M) -(\lambda(\fm I)+b_0(I)-\lambda(R/I)b_0(I))b_i(M)- \lambda(IM_i) \\
        = & b_0(I)\lambda(R/I)b_i(M) - \lambda(IM_i).
        \end{aligned}
        \end{equation*}  
    \end{proof}

    \begin{chunk}
    In \cite{huneke2014vanishing}, one crucial fact from Lescot is that $k$ is not a direct summand of a module $N$ if and only if $\fm N \subset \Soc(N)$. Moreover the fact that $\fm^2 \ne 0$, $N$ is non-free, and $\Tor_i(M,N) = 0$ for some non-free module $M$ implies that $k$ is not a direct summand of $N_j$ for $j\leq i$. Hence the length of the module $\fm N$ can be computed via the module $\Soc(N)$ in such situations. If in addition $\Soc(R) = \fm^2$, then $b_0(\fm N_i) = \dim \Soc(R) b_{i-1}(N)$. Here we believe that $\Soc(N) \subset \fm N$ is the assumption that plays a critical role. We obtain 
    \end{chunk}

     \begin{lemma}
        \label{socle}
        Assume $\Soc(R) = \fm I$. If $M$ is a non-free syzygy module and $\Soc(M_1) \subset IM_1$, then $\Soc(M_1) = IM_1 = mIR^{b_0(M)}$.
    \end{lemma}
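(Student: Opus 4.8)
The plan is to extract everything from the single hypothesis $\Soc(R)=\fm I$ via its immediate consequence $\fm^{2}I=0$: indeed $\fm I\subseteq\Soc(R)=(0:_{R}\fm)$ forces $\fm\cdot\fm I=0$. Write $b=b_{0}(M)$ and fix the minimal presentation $0\to M_{1}\to R^{b}\xrightarrow{\pi}M\to 0$. By \ref{minimalres} minimality gives $M_{1}\subseteq\fm R^{b}$, and since $M$ is a syzygy module, say $M=L_{j}$ for some module $L$ and some $j\geq 1$, the same inclusion in \ref{minimalres} gives $M\subseteq\fm R^{b_{j-1}(L)}$. These two embeddings, together with $\fm^{2}I=0$, are all that is needed.

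First I would promote the given containment $\Soc(M_{1})\subseteq IM_{1}$ to an equality: since $M_{1}\subseteq\fm R^{b}$ we get $\fm(IM_{1})\subseteq\fm^{2}I R^{b}=0$, so $IM_{1}\subseteq\Soc(M_{1})$ and hence $\Soc(M_{1})=IM_{1}$. Next I would record $\Soc(R^{b})=\Soc(R)^{\oplus b}=\fm I R^{b}$ and establish two containments. On one hand $IM_{1}\subseteq I\fm R^{b}=\fm I R^{b}=\Soc(R^{b})$. On the other hand, because $M$ is a syzygy module we have $\fm I M\subseteq\fm^{2}I R^{b_{j-1}(L)}=0$, so applying $\pi$ to $\Soc(R^{b})=\fm I R^{b}$ gives $\pi(\Soc(R^{b}))=\fm I M=0$; thus $\Soc(R^{b})\subseteq\ker\pi=M_{1}$, and as every element of $\Soc(R^{b})$ is killed by $\fm$ we conclude $\Soc(R^{b})\subseteq\Soc(M_{1})$, using $\Soc(M_{1})=M_{1}\cap\Soc(R^{b})$. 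Stringing the containments together,
\[
\fm I R^{b}=\Soc(R^{b})\subseteq\Soc(M_{1})=IM_{1}\subseteq\fm I R^{b}
\]
forces all four modules to coincide, which is exactly $\Soc(M_{1})=IM_{1}=\fm I R^{b_{0}(M)}$.

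The step carrying the real weight is recognizing why ``$M$ is a syzygy module'' is the right hypothesis: it is precisely the resulting embedding $M\subseteq\fm R^{b_{j-1}(L)}$ which, via $\fm^{2}I=0$, makes $\pi$ annihilate $\Soc(R^{b})$ and hence places $\Soc(R^{b})$ inside $M_{1}$. For $M$ free the conclusion genuinely fails, since then $M_{1}=0$ while $\fm I R^{b}=\Soc(R^{b})\neq 0$, so some such hypothesis is unavoidable. Everything else is bookkeeping with socles of submodules of free modules, the minimality of the resolution, and the identity $\fm^{2}I=0$; once the two embeddings and this identity are in hand, the three displayed containments close the proof.
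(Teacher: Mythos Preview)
Your proof is correct and follows essentially the same route as the paper's: both extract $\fm^{2}I=0$ from $\Soc(R)=\fm I$, use the syzygy hypothesis to get $\fm I M=0$ and hence $\fm I R^{b}\subseteq M_{1}$, upgrade $\Soc(M_{1})\subseteq IM_{1}$ to equality via $M_{1}\subseteq\fm R^{b}$, and then close the chain of containments. The only cosmetic difference is that the paper routes the chain through $\Soc(\fm R^{b})$ while you route it through $\Soc(R^{b})$, but since $\Soc(R)=\fm I\subseteq\fm$ these coincide.
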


    \begin{proof}
        Notice that $M$ being a syzygy module of $R$ implies $\fm IM = 0$.
        Looking at the short exact sequence 
        \[
        0 \rightarrow M_1 \rightarrow R^{b_0(M)} \rightarrow M \rightarrow 0,
        \]
        we have $M_1  \subset mR^{b_0(M)}$ and therefore $\fm IM_1 = 0$. Hence, we have $\Soc(M_1) = IM_1$. However, $\fm IM = 0$ implies $\fm IR^{b_0(M)} \subset M_1$. Hence
        \[
        \fm IR^{b_0(M)} = \Soc(\fm IR^{b_0(M)}) \subset \Soc(M_1) \subset \Soc(\fm R^{b_0(M)}) = \fm IR^{b_0(M)}.
        \]
    \end{proof}

    However, comparing \ref{IMbetti}(2) and \ref{socle}, it can be deduced that 
    \begin{cor}
        Let $M$ be a non-zero module and $N$ be a non-free module. Assume $\fm IM =0$ and $M,M_1,N,N_1,N_2$ are $I$-free and $\Tor_{1}(M,N) = \Tor_{2}(M,N) = 0$. If $\Soc(R) = \fm I$ and $\Soc(M_1) = IM_1$, then $\lambda(R/I) = 1$ and $I = \fm$.
    \end{cor}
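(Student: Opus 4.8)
The plan is to write down two independent formulas for $\lambda(IM_1)$ and force them to agree. One formula will come from Lemma \ref{IMbetti}(2), the other from Lemma \ref{socle}.

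First I would apply Lemma \ref{IMbetti}(2). All of its hypotheses hold here: $\fm IM = 0$; the modules $M, M_1, N, N_1, N_2$ are $I$-free; $\Tor_1(M,N) = \Tor_2(M,N) = 0$; $M$ is nonzero and $N$ is non-free. It therefore gives
\[
\lambda(IM_1) = \bigl(\lambda(\fm I) + b_0(I) - \lambda(R/I)\,b_0(I)\bigr)\,b_0(M).
\]

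Next I would obtain the ``socle'' formula. The assumption $\Soc(R) = \fm I$ forces $\fm^2 I = 0$, since every element of $\fm I$ is then annihilated by $\fm$. Now the only role of the phrase ``syzygy module'' in the proof of Lemma \ref{socle} is to deduce $\fm IM = 0$, which is assumed outright here (the other input, $M_1 \subseteq \fm R^{b_0(M)}$, holds for the minimal resolution of any module); and $\Soc(M_1) = IM_1$ gives in particular $\Soc(M_1) \subseteq IM_1$. So Lemma \ref{socle} applies and yields $IM_1 = \Soc(M_1) = \fm I R^{b_0(M)}$, whence
\[
\lambda(IM_1) = \lambda\bigl(\fm I R^{b_0(M)}\bigr) = \lambda(\fm I)\,b_0(M).
\]
(If $M$ were free this step would read $\fm I R^{b_0(M)} \subseteq M_1 = 0$, contradicting $\Soc(R) = \fm I \neq 0$, so that case does not arise.) I expect this step --- invoking Lemma \ref{socle} for an $M$ not assumed to be a syzygy module --- to be the only delicate point; it is harmless for the reason just given, and one could instead restate Lemma \ref{socle} with hypothesis ``$\fm IM = 0$''.

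Finally I would compare the two expressions. Since $M \neq 0$, Nakayama gives $b_0(M) \neq 0$, so cancelling it leaves $\lambda(\fm I) = \lambda(\fm I) + b_0(I) - \lambda(R/I)\,b_0(I)$, i.e. $b_0(I)\bigl(1 - \lambda(R/I)\bigr) = 0$. Because $N$ is $I$-free and non-free, $I \neq 0$ (otherwise $N = N/IN$ would be free over $R$), so $b_0(I) \neq 0$ and hence $\lambda(R/I) = 1$. A length-one quotient $R/I$ is a simple $R$-module, necessarily isomorphic to $R/\fm$, and comparing annihilators gives $I = \fm$.
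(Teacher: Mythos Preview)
Your proof is correct and follows exactly the approach the paper intends: the paper's proof is the single sentence ``comparing \ref{IMbetti}(2) and \ref{socle}, it can be deduced that\dots'', and you have carried out precisely that comparison, correctly extracting $b_0(I)(1-\lambda(R/I))=0$ and concluding $I=\fm$. Your care in checking that Lemma~\ref{socle} applies even though $M$ is not assumed to be a syzygy module (since its proof only uses $\fm IM=0$, which you have) is a detail the paper glosses over.
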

    In other words, assuming $\Soc(R) = \fm I$ and $\Soc (N_i) = IN_i$ for some $i$ essentially forces $I = \fm$ and the results about rings with $\fm^3 = 0$ have been proved in \cite{huneke2014vanishing}. Hence, some of the results do not seem to generalize.\\ 
    However, we can obtain some results assuming only one of the socle conditions.

     \begin{lemma}
        \label{betti1socle}
        Let $M$ be a non-free module. Assume $\Soc(R) = \fm I$.
        Then 
        \[
       \lambda(M_1/IM_1) \geq b_0 (M) \cdot b_0(I) - \lambda(IM).
        \]

    \end{lemma}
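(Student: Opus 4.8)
The plan is a direct length computation, with the socle hypothesis used exactly once. Start from the minimal presentation
\[
0\to M_1\to R^{b_0(M)}\to M\to 0,
\]
so that $M_1\subseteq \fm R^{b_0(M)}$ by \ref{minimalres}. Multiplying by $I$ gives $IM_1\subseteq I\fm R^{b_0(M)}=\fm I R^{b_0(M)}$, and since $\fm I=\Soc(R)$ by hypothesis this last module is $\Soc(R)^{b_0(M)}$; hence
\[
\lambda(IM_1)\ \le\ b_0(M)\,\lambda(\fm I).
\]
This is the one structural input; the rest is bookkeeping.

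Next I would expand $\lambda(M_1/IM_1)=\lambda(M_1)-\lambda(IM_1)$, use additivity of length along the presentation to write $\lambda(M_1)=b_0(M)\lambda(R)-\lambda(M)$, and bound $\lambda(M)=\lambda(M/IM)+\lambda(IM)\le b_0(M)\lambda(R/I)+\lambda(IM)$ via Lemma~\ref{property}(1). Combining these three inequalities yields
\[
\lambda(M_1/IM_1)\ \ge\ b_0(M)\bigl(\lambda(R)-\lambda(R/I)-\lambda(\fm I)\bigr)-\lambda(IM).
\]
Finally I would simplify the coefficient: from $0\to I\to R\to R/I\to 0$ we get $\lambda(R)-\lambda(R/I)=\lambda(I)$, and then $\lambda(I)-\lambda(\fm I)=\lambda(I/\fm I)=b_0(I)$, giving exactly $\lambda(M_1/IM_1)\ge b_0(M)\,b_0(I)-\lambda(IM)$.

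I do not expect a genuine obstacle: the argument is short and elementary. The two points that require a little care are (i) invoking minimality of the resolution, without which $M_1$ need not sit inside $\fm R^{b_0(M)}$ and the bound on $\lambda(IM_1)$ fails; and (ii) the identification $\lambda(I/\fm I)=b_0(I)$ of the minimal number of generators of $I$ with the length of its cotangent space. The hypothesis that $M$ is non-free is not actually needed for the inequality — when $M$ is free, $M_1=0$ and both sides are automatically compatible — but it is harmless and reflects the intended use of the lemma (compare with the equality statements in Lemma~\ref{IMbetti}(2) and \ref{socle}).
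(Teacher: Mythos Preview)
Your proof is correct and follows essentially the same path as the paper's: bound $\lambda(IM_1)$ by $b_0(M)\,\lambda(\fm I)$ using the inclusion $M_1\subseteq \fm R^{b_0(M)}$, then combine with the length identities for $\lambda(M_1)$, $\lambda(M)$, and $\lambda(R)=\lambda(R/I)+b_0(I)+\lambda(\fm I)$. The only cosmetic difference is that the paper routes the first bound through $IM_1\subseteq\Soc(M_1)\subseteq \fm I\,R^{b_0(M)}$ (using $\fm IM_1=0$ and $\Soc(R)=\fm I$), whereas you observe the inclusion $IM_1\subseteq \fm I\,R^{b_0(M)}$ directly.
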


    \begin{proof}
        We have $\lambda(M_{1}/IM_{1}) = \lambda(M_{1}) - \lambda(IM_{1}) $. For simplicity, we denote $b_i = b_i(M)$ as the $i$-th Betti number of $M$. But since $\fm IM_{1} = 0$,    $\lambda(IM_{1}) \leq \lambda(\Soc(M_{1})) =  b_0 \cdot \lambda(\fm I)$. The equality follows from the last line in the proof of \ref{socle}.\\
        On the other hand, notice that $\lambda(M_{1}) = b_0 \cdot \lambda(R) - \lambda(M)$. Hence we can rewrite 

        \begin{equation}
        \label{mess1}
        \lambda(M_1/IM_1)  \geq b_0 \cdot \lambda(R) - \lambda(M) - b_0\cdot\lambda(\fm I)
        \end{equation}
        and similarly we have 
        \[
        \lambda(M) = \lambda(M/IM) + \lambda(IM) \leq b_0 \cdot\lambda(R/I) + \lambda(IM).
        \]
        Then, notice that 
        \[
        \lambda(R) = \lambda(R/I) + b_0(I) + \lambda(\fm I).
        \]
        Now substitute them into \ref{mess1} we have 
        \begin{equation}
            \begin{aligned}
       \lambda(M_1/IM_1) &\geq b_0 \cdot (\lambda(R/I) + b_0(I) + \lambda(\fm I)) - b_0 \cdot\lambda(R/I) - \lambda(IM) - b_0\cdot\lambda(\fm I)\\
        &= b_0  \cdot b_0(I) - \lambda(IM).
         \end{aligned}
        \end{equation} 
        
    \end{proof}

    Before starting the first main theorem, we have some notations for simplicity

    \begin{nota}
        Denote $s =\lambda(R/I) $, $h = b_0(I)$, and $c = \lambda(\fm I)$. Note that $\displaystyle s+h+c = \lambda(R)$.
    \end{nota}

        \begin{theorem}
        \label{ThreeVanishP}
        Let $(R,\fm)$ be a local Artinian ring with a fixed ideal $I$ such that $\fm^2I = 0$. Let $M,N$ be nonzero, non-free modules such that $\fm IM = 0$ and $\fm IN = 0$. If there exists $j > 0$  such that for $t = j, j+1, j+2$
        \begin{enumerate}
            \item $\Tor_t(M,N) = 0 $ and
            \item $M_t,N_t$ are $I$-free,
        \end{enumerate}
        
        then $\gamma_I(M)$ and $\gamma_I(N)$ satisfy the equation $ \displaystyle{ s\gamma^2 - sh\gamma + c+h-sh = 0 }$.
        \end{theorem}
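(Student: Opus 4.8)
The plan is to trade the invariants $\gamma_I(M),\gamma_I(N)$ for the invariants $\gamma_I(M_j),\gamma_I(M_{j+1}),\gamma_I(N_j),\gamma_I(N_{j+1})$ of the relevant syzygies, produce enough linear and bilinear relations among all of these, and then eliminate. The setup is favorable: since $\fm^2I=0$ and every syzygy $M_t,N_t$ with $t\ge 1$ lies inside $\fm$ times a free module, one automatically has $\fm IM_t=\fm IN_t=0$, and by hypothesis $M_t,N_t$ are $I$-free for $t=j,j+1,j+2$. Throughout I will use the length identities $\lambda(X)=s\,b_0(X)\big(\gamma_I(X)+1\big)$ and $\lambda(IX)=s\,b_0(X)\,\gamma_I(X)$, valid for any $I$-free finite module $X$.

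First I would note that, because $\fm IM=0$ and $\Tor_t(M,N)=0$ with $N_t$ $I$-free for $t=j,j+1,j+2$, Lemma \ref{bettiandgamma}(3) applied at homological degrees $j+1$ and $j+2$ forces the Betti sequences to be geometric near degree $j$: $b_{j+1}(M)=\gamma_I(N)b_j(M)$, $b_{j+2}(M)=\gamma_I(N)b_{j+1}(M)$, and symmetrically for $N$ with ratio $\gamma_I(M)$. Then I would apply Lemma \ref{purebetti} with $i=j$, whose hypotheses are exactly met, to get $\lambda(M_{j+1}/IM_{j+1})=sh\,b_j(M)-\lambda(IM_j)$; rewriting the left-hand side as $s\,b_{j+1}(M)$ and $\lambda(IM_j)$ as $s\,b_j(M)\gamma_I(M_j)$ by $I$-freeness, then dividing by $s\,b_j(M)$ and using $b_{j+1}(M)=\gamma_I(N)b_j(M)$, this collapses to $\gamma_I(M_j)=h-\gamma_I(N)$, and symmetrically $\gamma_I(N_j)=h-\gamma_I(M)$.

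Next I would extract a bilinear relation from the short exact sequence $0\to M_{j+1}\to R^{b_j(M)}\to M_j\to 0$: taking lengths and using that $M_j,M_{j+1}$ are $I$-free gives $\lambda(R)b_j(M)=s\,b_j(M)(\gamma_I(M_j)+1)+s\,b_{j+1}(M)(\gamma_I(M_{j+1})+1)$, and dividing by $b_j(M)$, substituting $b_{j+1}(M)=\gamma_I(N)b_j(M)$, $\gamma_I(M_j)=h-\gamma_I(N)$, and $\lambda(R)=s+h+c$, everything cancels down to $s\,\gamma_I(N)\gamma_I(M_{j+1})=c+h-sh$, and symmetrically $s\,\gamma_I(M)\gamma_I(N_{j+1})=c+h-sh$.

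At this point the theorem reduces to the single extra identity $\gamma_I(M_{j+1})=\gamma_I(M_j)$ (equivalently $\gamma_I(N_{j+1})=\gamma_I(N_j)$): substituting $\gamma_I(M_{j+1})=\gamma_I(M_j)=h-\gamma_I(N)$ into the bilinear relation gives $s\gamma_I(N)^2-sh\gamma_I(N)+(c+h-sh)=0$, and since this polynomial is unchanged under $\gamma\mapsto h-\gamma$ while $\gamma_I(N_j)=h-\gamma_I(M)$, the symmetric computation yields the same equation for $\gamma_I(M)$. Proving $\gamma_I(M_{j+1})=\gamma_I(M_j)$ is the crux and the main obstacle: given the relations above it is equivalent to $\lambda(IM_{j+2})=(c+h-sh)b_{j+1}(M)$, that is, essentially to a copy of Lemma \ref{IMbetti}(2) shifted up one homological degree, hence to the persistence of the steady-state relation $b_{j+2}(M)=(h-\gamma_I(M_{j+1}))b_{j+1}(M)$. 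I expect this to require running the K\"unneth-type syzygy computation behind Lemma \ref{IMbetti} one degree higher, fed by the third vanishing $\Tor_{j+2}(M,N)=0$ and the $I$-freeness of $M_{j+2},N_{j+2}$ (and possibly the $\Tor$-propagation from the corollary following Proposition \ref{Prop:gammaandTor}); once that step is secured, only the routine length bookkeeping above remains.
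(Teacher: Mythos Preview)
Your proposal has a genuine gap: you reduce the theorem to the identity $\gamma_I(M_{j+1})=\gamma_I(M_j)$, call it ``the crux and the main obstacle,'' and then only speculate about how one might prove it. That speculation is not a proof, and the detour through the $\gamma$-invariants of successive syzygies makes the task look harder than it is.

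The paper's argument is much more direct and never computes $\gamma_I(M_j)$ or $\gamma_I(M_{j+1})$ at all. It assembles two facts about the Betti numbers of $N$: first, Lemma~\ref{bettiandgamma}(3) at $i=j+1$ and $i=j+2$ gives the geometric ratios $b_{j+1}(N)=\gamma_I(M)b_j(N)$ and $b_{j+2}(N)=\gamma_I(M)^{2} b_j(N)$; second, Lemma~\ref{purebetti} (applied with the roles of $M$ and $N$ interchanged, which is precisely where the $I$-freeness of the $M_t$ is used) produces the three-term linear recurrence
\[
s\,b_{j+2}(N)=sh\,b_{j+1}(N)-(c+h-sh)\,b_j(N).
\]
Substituting the first into the second and dividing by $b_j(N)\ne 0$ gives $s\gamma_I(M)^2-sh\gamma_I(M)+(c+h-sh)=0$ in one line; symmetry in $M$ and $N$ yields the same equation for $\gamma_I(N)$. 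Your bilinear relation from step~3, rewritten as $\lambda(IN_{j+1})=(c+h-sh)b_j(N)$, is exactly the constant term of this recurrence, so the ingredients were already on your page; you simply did not recognize that packaging them as a Betti-number recurrence (rather than as statements about $\gamma_I(M_j),\gamma_I(M_{j+1})$) finishes the argument immediately. In particular, the equality $\gamma_I(M_{j+1})=\gamma_I(M_j)$ you were stuck on is a \emph{consequence} of the quadratic, not a prerequisite for it.
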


        \begin{proof}
        By Lemma \ref{bettiandgamma} (3), we have 
        \begin{equation}
        \label{eq:ThreeVanishP1}
        b_{j+2}(N) = \gamma_I(M)b_{j+1}(N) = \gamma_I(M)^2b_j(N) \neq 0.
        \end{equation}
        
        By Lemma \ref{purebetti} we also have
        \[
        sb_{j+2}(N) =shb_{j+1}(N) - (c+h-sh)b_{j}(N).
        \]
        Now replacing $b_{j+2}(N)$ and $b_{j+1}(N)$ using equation \ref{eq:ThreeVanishP1}, we have
        \[
        s\gamma_I(M)^2b_{j}(N) =hs\gamma_I(M)b_j(N) - (c+h-sh)b_{j} (N).
        \]
        Since $N$ is non-free, \[          s\gamma_I(M)^2 -hs\gamma_I(M) + (c+h-sh) = 0.        \] 
        \end{proof}

     \begin{cor}
        Let $(R,\fm)$ be an Artinian ring and $I$ be a nonzero ideal such that $\fm^2 I = 0$ and $ \lambda(R) < 2sh $. Assume that $M$ is a nonzero module such that $\fm IM = 0$ and $N$ is non-free. Let $j > 1$ be an integer such that for $t = j, j+1, j+2$,  $M_t,N_t$ are $I$-free. Set $\beta = \max \{\log_2(b_1(M))+1,\log_2(b_1(N))+1, j+ 2 \}$. If $\Tor_i(M,N) = 0 $ for $i \in [1,\beta]$, then $M$ or $N$ is free.
        \end{cor}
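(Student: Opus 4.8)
The plan is to argue by contradiction. If $M$ is free there is nothing to prove, so assume $M$ is non-free and suppose also that $N$ is non-free. I would then show that the single number $\gamma:=\gamma_I(M)$ is forced to be an integer in $[1,h-1]$ while at the same time being a root of the quadratic produced by Theorem \ref{ThreeVanishP}; the hypothesis $\lambda(R)<2sh$ makes these two facts incompatible.

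First, producing the quadratic. Since $N$ is non-free and $\fm^2I=0$, every higher syzygy of $N$ (and of $M$, as $j>1$) is annihilated by $\fm I$; indeed $\fm IN_t\subseteq\fm^2IR^{b_{t-1}(N)}=0$ for $t\ge 1$, and likewise for $M$. This lets me meet the hypothesis $\fm IN=0$ of Theorem \ref{ThreeVanishP} after passing, if need be, to a suitable syzygy pair, keeping (via $j>1$ and $\beta\ge j+2$) three consecutive $I$-free syzygies of each module and the vanishings $\Tor_j=\Tor_{j+1}=\Tor_{j+2}=0$. The theorem then gives
\[
s\gamma^2-sh\gamma+(c+h-sh)=0 .
\]
By Vieta the roots have sum $h$ and product $(c+h-sh)/s$, so the companion root of $\gamma$ is $h-\gamma$, and, using $s+h+c=\lambda(R)$,
\[
\gamma(h-\gamma)=\frac{c+h-sh}{s}=\frac{\lambda(R)}{s}-(h+1)<2h-(h+1)=h-1 ,
\]
the strict inequality being precisely $\lambda(R)<2sh$.

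Second, constraining $\gamma$ and concluding. Applying the results of Section \ref{Betti numbers} to $\gamma$ (again through $M$ together with suitable syzygies of $N$): Lemma \ref{one}(1) gives $\gamma\ge 1$; Lemma \ref{one}(2) gives that $\gamma$ is an integer — this is where the terms $\log_2 b_1(M)+1$ and $\log_2 b_1(N)+1$ in $\beta$ enter, to supply enough consecutive $\Tor$-vanishings for the divisibility argument of that lemma; and Lemma \ref{IMbetti}(1), which reads $b_1(M)=(h-\gamma)b_0(M)$, forces $\gamma<h$ since $M$ is non-free and thus $b_1(M)\ne 0$. Hence $\gamma$ is an integer with $1\le\gamma\le h-1$. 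But on $[1,h-1]$ the concave function $x\mapsto x(h-x)$ lies above its chord, which is the constant $h-1$ because the endpoint values $1\cdot(h-1)$ and $(h-1)\cdot 1$ coincide; so $\gamma(h-\gamma)\ge h-1$, contradicting $\gamma(h-\gamma)<h-1$. (If $h=1$ the range $[1,h-1]$ is empty and already $1\le\gamma<h=1$ is absurd.) Therefore $M$ and $N$ cannot both be non-free.

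The genuine content here — the Vieta/concavity squeeze — is short; the main obstacle I anticipate is the index bookkeeping of the first step. The cited results require $\fm IN=0$, $I$-freeness of syzygies in prescribed ranges, and $\Tor$-vanishing on $[1,\beta]$, whereas the hypotheses only directly give three consecutive $I$-free syzygies of each module together with $\fm IM=0$; the delicate point is to choose the syzygy shift so that $\fm IN=0$ holds without losing alignment with the $I$-free syzygies of $M$, and to confirm that $\beta$ as defined is large enough for every invocation of Theorem \ref{ThreeVanishP} and of Lemmas \ref{one} and \ref{IMbetti}. One should also watch the small-$h$ cases (e.g. $h=2$), where the inequality $s(h+2)^2>4\lambda(R)$ — which is what guarantees two distinct real roots, needed for the product identity above — is tightest.
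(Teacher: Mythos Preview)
Your approach is essentially the paper's: assume $M$ non-free, extract the quadratic $s\gamma^2-sh\gamma+(c+h-sh)=0$ from Theorem~\ref{ThreeVanishP}, use $\lambda(R)<2sh$ to locate its roots, and combine integrality of $\gamma_I(M)$ (Lemma~\ref{one}(2)) with the bound $\gamma_I(M)<h$ (Lemma~\ref{IMbetti}(1)). Your Vieta/concavity phrasing is a pleasant repackaging of the paper's explicit computation that the two roots satisfy $\gamma_1<1$ and $\gamma_2>h-1$.

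There is, however, a genuine gap at the point where you invoke Lemma~\ref{one}(1) to obtain $\gamma\ge 1$. That lemma requires $\Tor_i(M,N)=0$ \emph{and} $N_i$ $I$-free for all $i\in[1,\,b_0(N)+1]$. The quantity $\beta$ contains no term of order $b_0(N)$ (only $\log_2 b_1$ terms and $j+2$), and only the three syzygies $N_j,N_{j+1},N_{j+2}$ are assumed $I$-free; a syzygy shift does not help unless the shifted $b_0$ is at most $2$. Without $\gamma\ge 1$ your concavity squeeze leaves the value $\gamma=0$ uneliminated, since $0\cdot(h-0)=0<h-1$. The paper never calls Lemma~\ref{one}(1): it concludes that the integer $\gamma_I(M)\in[0,h)$ cannot equal $\gamma_2>h-1$, hence $\gamma_I(M)=\gamma_1<1$, hence $\gamma_I(M)=0$; then one more application of Lemma~\ref{bettiandgamma}(3) (using $\fm IM=0$, $\Tor_j=\Tor_{j+1}=0$, and the $I$-freeness of $N_j,N_{j+1}$) gives $b_{j+1}(N)=\gamma_I(M)\,b_j(N)=0$, contradicting non-freeness of $N$. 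Replacing your appeal to Lemma~\ref{one}(1) with this two-line endgame closes the gap and leaves the rest of your argument intact.
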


    \begin{proof}
        Assume by contradiction that $M$ is non-free. Then we can apply Theorem \ref{ThreeVanishP}, so $\gamma_I(M)$ satisfies the equation 
        \[
        \gamma^2 - h\gamma + \frac{c+h-sh}{s} = 0.
        \]
        For simplicity, denote $\alpha = \frac{c+h-sh}{s}$.
       Hence $\gamma_I(M)$ lies in the set $\{\frac{h + \sqrt{h^2 - 4\alpha}}{2}, \frac{h - \sqrt{h^2 - 4\alpha}}{2}\}$. Notice that by the assumptions $4\alpha < 4h- 4$ and $h -1 \geq 0$, hence $\sqrt{h^2 -4\alpha} > h-2 $. As a result, if we denote $\gamma_1 \leq \gamma_2$ as the roots of $\gamma^2 - h\gamma + \frac{c+h-sh}{s} = 0$, $\gamma_1 = \frac{h - \sqrt{h^2 - 4\alpha}}{2} < \frac{h-h+2}{2} = 1 $ and $\gamma_2  = \frac{h + \sqrt{h^2 - 4\alpha}}{2} > \frac{h + h -2}{2} = h - 1$. By Lemma \ref{one}(b), we have that $\gamma_I(M)$ is an integer. Since $M$ is non-free, so $b_1(M) \ne 0$ and therefore by Lemma \ref{IMbetti}(a), we have $\gamma_I(M) < h$. Therefore, we must have $\gamma_I(M) = \gamma_1 = 0$. However, since $\Tor_j(M,N_1) = \Tor_{j+1}(M,N_1) = 0$ and $\fm IN_1 = 0$, we have that by Lemma \ref{bettiandgamma}(c) $b_1(N_1) = \gamma_I(M)b_0(N_1) = 0$, which contradicts the assumption that $N$ is non-free. Hence we conclude $M$ must be free.\end{proof}

    \begin{chunk}
    \label{omega}
    Next we collect some results from \cite[2.7]{huneke2014vanishing}. 
            For any Artinian ring $R$ (not necessarily with $\fm^2I = 0$) with a dualizing module $\omega$, we have $b_0(\omega)$ = $\dim_k \Soc(R)$, $\dim_k \Soc(\omega) = 1$, and $\lambda(R) = \lambda(\omega)$.
     \end{chunk}

     \begin{remark}
    Assume $\Soc(M) = IM$ and $\Soc(M^{\dagger}) = IM^{\dagger}$  where $(-)^{\dagger} = \Hom(-,\omega)$. Then by Matlis duality we have 
        \[
        \lambda(IM^{\dagger}) = \dim_k \Soc(M^{\dagger}) = b_0(M)
        \]
        and 
        \[
        \Soc(M^{\dagger}) = \Hom(k,\Hom(M,\omega)) \cong \Hom(k,\omega)^{b_0(M)}
        \]
        so 
        \[
        \lambda(IM) = \lambda(IM^{\dagger\dagger}) = \lambda(  \Soc(M^{\dagger\dagger})) = b_0(M^{\dagger}).
        \]
        Hence 
        \begin{equation}
            \begin{aligned}        
        \gamma_I(M) &= \frac{\lambda(IM)}{\lambda(M/IM)} &\geq& \frac{b_0(M^{\dagger})}{b_0(M)\lambda(R/I)} &=\frac{b_0(M^{\dagger})}{\lambda(IM^{\dagger})\lambda(R/I)} \\
        &= \frac{1}{\lambda(R/I)^2} \frac{b_0(M^{\dagger})\lambda(R/I)}{\lambda(IM^{\dagger})}  &\geq&    \frac{1}{\lambda(R/I)^2} \frac{\lambda(M^{\dagger}/IM^{\dagger})}{\lambda(IM^{\dagger})} &= \frac{1}{\lambda(R/I)^2} \frac{1}{\gamma_I(M^{\dagger})}. 
        \end{aligned}
        \end{equation}
        The first inequality is equality if $M$ is $I$-free and the second is equality if $M^{\dagger}$ is $I$-free.
        \end{remark}

        \begin{prop}      
        Let $(R,\fm)$ be an Artinian local ring with a canonical module $\omega$ and $I$ be an ideal different from $\fm$. Assume that there exists a finite module $M$, such that $\Soc(M) = IM$, $\Soc(M^{\dagger}) = IM^{\dagger}$, and $\fm IM = \fm IM^{\dagger}=0$. If for all $i \gg 0$, both $M_i$ and $M^{\dagger}_i$ are $I$-free and $\Tor_i(M,M^{\dagger}) = 0$, then $M$ or $M^{\dagger}$ is free.
        \end{prop}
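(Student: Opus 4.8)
The plan is to argue by contradiction: assume that neither $M$ nor $M^{\dagger}$ is free, and squeeze the invariants $\gamma_I(M)$ and $\gamma_I(M^{\dagger})$ until they collapse. For $i \gg 0$ we have $\Tor_i(M, M^{\dagger}) = 0$ with $M_i$ and $M^{\dagger}_i$ both $I$-free, and $\fm IM = \fm IM^{\dagger} = 0$; so Lemma \ref{bettiandgamma}(3), applied to $(M, M^{\dagger})$ and to $(M^{\dagger}, M)$ once the index is translated into this stable range, gives
\[
b_i(M^{\dagger}) = \gamma_I(M)\, b_{i-1}(M^{\dagger}) \quad\text{and}\quad b_i(M) = \gamma_I(M^{\dagger})\, b_{i-1}(M) \qquad (i \gg 0).
\]
The proofs of Lemma \ref{one}(1),(2) use only a sufficiently long run of consecutive vanishing $\Tor$'s with $I$-free syzygies, which is available here after shifting the index; hence $\gamma_I(M)$ and $\gamma_I(M^{\dagger})$ are positive integers, and the Betti sequences of $M$ and $M^{\dagger}$ are eventually constant and strictly positive.

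Next comes the duality squeeze. Since $R$ is Artinian with canonical module $\omega$, Matlis duality gives $M^{\dagger\dagger} \cong M$, and — exactly as in \ref{omega} and the Remark preceding this proposition — the socle hypotheses yield $\lambda(IM) = \dim_k\Soc(M) = \dim_k \Soc(M^{\dagger\dagger}) = b_0(M^{\dagger})$ and, symmetrically, $\lambda(IM^{\dagger}) = b_0(M)$. Therefore
\[
\gamma_I(M) = \frac{\lambda(IM)}{\lambda(M/IM)} = \frac{b_0(M^{\dagger})}{\lambda(M/IM)} \le \frac{b_0(M^{\dagger})}{b_0(M)},
\]
the inequality being $\lambda(M/IM) \ge \lambda(M/\fm M) = b_0(M)$ (valid since $I \subseteq \fm$), and symmetrically $\gamma_I(M^{\dagger}) \le b_0(M)/b_0(M^{\dagger})$. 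Multiplying gives $\gamma_I(M)\gamma_I(M^{\dagger}) \le 1$, which together with the previous step forces $\gamma_I(M) = \gamma_I(M^{\dagger}) = 1$, $b_0(M) = b_0(M^{\dagger})$, and $\lambda(M/IM) = b_0(M)$. The last identity means $M/IM \cong k^{b_0(M)}$, i.e. $\fm M = IM$, whence $\fm^2 M = \fm(\fm M) = \fm IM = 0$ (and symmetrically $\fm M^{\dagger} = IM^{\dagger}$, $\fm^2 M^{\dagger} = 0$).

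To reach the contradiction, apply the index-shifted Lemma \ref{IMbetti}(1) to $(M, M^{\dagger}_a)$ and $(M^{\dagger}, M_a)$ for $a \gg 0$: this gives $b_1(M) = (h - \gamma_I(M))b_0(M) = (h-1)b_0(M)$ and $b_1(M^{\dagger}) = (h-1)b_0(M^{\dagger})$, where $h = b_0(I)$. If $h = 1$ then $b_1(M) = 0$, so $M$ is free — contradiction. If $h \ge 2$, exploit $\fm^2 M = 0$: from $0 \to M_1 \to R^{b_0(M)} \to M \to 0$ one has $\fm^2 R^{b_0(M)} \subseteq M_1 \subseteq \fm R^{b_0(M)}$ and $\fm M_1 \subseteq \fm^2 R^{b_0(M)}$, so, using $\lambda(M) = 2b_0(M)$ and $\lambda(R) = 1 + e + \lambda(\fm^2)$ with $e = b_0(\fm)$,
\[
b_1(M) = \lambda(M_1/\fm M_1) \ge \lambda\big(M_1/\fm^2 R^{b_0(M)}\big) = b_0(M)(e-1),
\]
forcing $h \ge e$. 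On the other hand, Lemma \ref{purebetti} applied to $(M, M^{\dagger})$ at a large index and the eventual constancy of $b_i(M)$ yield $\gamma_I(M_i) = h-1$ for $i \gg 0$; since $M_i$ is $I$-free and non-free, Lemma \ref{property}(2) gives $\gamma_I(M_i) < \gamma_I(R)$, i.e. $hs < \lambda(R) = s + h + c$ with $s = \lambda(R/I)$, $c = \lambda(\fm I)$. One then plays $h \ge e$ against $h(s-1) < s + c$, using $I \ne \fm$ (so $s \ge 2$), to push $s$ down to $1$, contradicting $I \ne \fm$; this is the syzygy-level analogue of the Corollary after Lemma \ref{socle}, which already records that the two socle hypotheses essentially force $I = \fm$.

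The main obstacle is precisely this last numerology. From the data assembled above ($\fm M = IM$, $\fm^2 M = 0$, $h \ge e$, $h(s-1) < s+c$, and $s+h+c = 1+e+\lambda(\fm^2)$ with $c = \lambda(\fm I) \le \lambda(\fm^2)$) a contradiction does not drop out formally, so the endgame needs either a further length identity for syzygies — a second pass through Lemma \ref{purebetti} or through Lemma \ref{IMbetti}(2), which requires upgrading $\fm^2 M = 0$ to a $\fm^2 I$-type vanishing — or a direct structural argument that $\fm M = IM$ together with $\fm^2 M = 0$ and $I \ne \fm$ is incompatible with all high syzygies of $M$ being $I$-free. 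Making that step sharp is where the real work of the proof lies; everything before it is bookkeeping with the invariant $\gamma_I$.
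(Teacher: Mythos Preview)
Your argument is incomplete, as you yourself concede in the final paragraph: once you reduce to $\gamma_I(M)=\gamma_I(M^{\dagger})=1$ you never extract a contradiction, and the numerology you assemble ($h\ge e$, $h(s-1)<s+c$, etc.) does not close. The paper's proof avoids this dead end entirely and is essentially one line after the setup.

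The missing idea is a \emph{strict} upper bound on the product $\gamma_I(M)\gamma_I(M^{\dagger})$. You bound $\lambda(M/IM)$ from below by $b_0(M)$, obtaining $\gamma_I(M)\le b_0(M^{\dagger})/b_0(M)$ and symmetrically, hence only $\gamma_I(M)\gamma_I(M^{\dagger})\le 1$. The paper instead uses the full chain of the preceding Remark---in particular $\lambda(M/IM)=b_0(M)\lambda(R/I)$ under $I$-freeness, not merely $\ge b_0(M)$---to obtain the \emph{equality}
\[
\gamma_I(M)\,\gamma_I(M^{\dagger})\;=\;\frac{1}{\lambda(R/I)^{2}}.
\]
Here the hypothesis $I\ne\fm$, which your argument never invokes, is exactly what makes this strict: it forces $\lambda(R/I)\ge 2$, so the product is $<1$. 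Then one of $\gamma_I(M),\gamma_I(M^{\dagger})$ is $<1$, and the Betti-ratio identities $b_{i+1}(M)=\gamma_I(M^{\dagger})b_i(M)$ and $b_{i+1}(M^{\dagger})=\gamma_I(M)b_i(M^{\dagger})$ for $i\gg 0$ force one Betti sequence to be eventually zero, so that module is free. There is no need for the integrality from Lemma~\ref{one}(2), nor for Lemma~\ref{IMbetti}, Lemma~\ref{purebetti}, or any endgame. (A small caveat: the equality step in the Remark uses $I$-freeness of $M$ and $M^{\dagger}$ themselves, which the stated hypotheses literally guarantee only for $M_i,M^{\dagger}_i$ with $i\gg 0$; the paper is tacitly taking this at $i=0$.)
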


        \begin{proof}
        Assume by contradiction that neither of them is free. By Lemma \ref{bettiandgamma}, there exists an integer $n$ such that for $i>n$
        \[
        \gamma_I(M^{\dagger})b_i(M) = b_{i+1}(M)
        \]
        and 
        \[
        \gamma_I(M)b_i(M^{\dagger}) = b_{i+1}(M^{\dagger}).
        \]
        Also by the remark we have 
        \[
        \gamma_I(M)\gamma_I(M^{\dagger}) = \frac{1}{\lambda(R/I)^2} < 1.
        \]
        Hence $ \gamma_I(M)$ or $ \gamma_I(M^{\dagger})$ is less than $1$ and therefore $M$ or $M^{\dagger}$ is free.
    \end{proof}

    \section{Betti Numbers of the Canonical Module}
    \label{Betti Numbers of the Canonical Module}
    In this section, we give some partial answer to the question posed in \cite{jorgensen2007growth}, "Does $b_1(\omega) \leq b_0(\omega)$ imply $R$ is Gorenstein?"
    We still assume all the rings to be local Artinian, with a fixed ideal $I$.
    First, we can obtain an upper bound for the Betti growth of $\omega$ while certain $Tor$ modules of the canonical module vanish. Before we start, let's recall some notations that will be frequently used in this section. We denote $s =\lambda(R/I) $, $h = b_0(I)$, and $c = \lambda(R) - s -h = \lambda(\fm I)$.
    \begin{prop}
        \label{bettiomega}
         If $\Tor_1(\omega, \omega) = \Tor_2(\omega, \omega) = 0$ and $\omega,\omega_1,\omega_2$ are $I$-free, then
         \[
          \frac{b_1(\omega)}{b_0(\omega)} \leq \frac{b_0(I)}{2}.
         \]
        
    \end{prop}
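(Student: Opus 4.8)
The plan is to reduce the claimed inequality to two numerical identities for the canonical module,
\[
\frac{b_1(\omega)}{b_0(\omega)} = b_0(I) - \gamma_I(\omega)
\qquad\text{and}\qquad
2\gamma_I(\omega) - \gamma_I(\omega\otimes_R\omega) = b_0(I),
\]
after which the conclusion is immediate: since $\gamma_I(\omega\otimes_R\omega)\ge 0$ (as for any nonzero module), the second identity forces $\gamma_I(\omega)\ge b_0(I)/2$, and substituting this into the first gives $b_1(\omega)/b_0(\omega)=b_0(I)-\gamma_I(\omega)\le b_0(I)/2$.

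First I would dispose of the Gorenstein case: there $\omega\cong R$ is free, $b_1(\omega)=0$, and the statement is trivial, so I may assume $R$ is not Gorenstein, whence $\omega$ and every syzygy $\omega_i$ is non-free. I would then obtain the two identities by specializing the machinery of Section \ref{Betti numbers} to $M=N=\omega$. The facts recorded in \ref{omega} --- $\lambda(\omega)=\lambda(R)$ and $b_0(\omega)=\dim_k\Soc(R)$ --- serve only to rewrite lengths; the substantive input is $\Tor_1(\omega,\omega)=\Tor_2(\omega,\omega)=0$ together with the $I$-freeness of $\omega,\omega_1,\omega_2$. Running the computation behind Lemma \ref{IMbetti}(1) for the pair $(\omega,\omega)$ yields $b_1(\omega)=(b_0(I)-\gamma_I(\omega))\,b_0(\omega)$, while Lemma \ref{bettiandgamma}(2) (the form Lemma \ref{bettiandgamma}(1) takes once the relevant tensor product $\omega\otimes_R\omega_1\subset\fm(\omega\otimes_R R^{b_0(\omega)})$ is annihilated by $I$) gives $b_1(\omega)/b_0(\omega)=\gamma_I(\omega)-\gamma_I(\omega\otimes_R\omega)$; equating the two right-hand sides produces the second displayed identity --- this is precisely Lemma \ref{sum} in the case $M=N=\omega$.

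The delicate point, and where I expect the main work to lie, is that Lemmas \ref{IMbetti} and \ref{sum} were established under an annihilation hypothesis of the form $\fm I M=0$, which is not part of the present statement; the two consecutive $\Tor$-vanishings and the $I$-freeness of $\omega_1,\omega_2$ must be used to recover those identities for $\omega$ itself. The natural fix is to replace $\omega$ by the syzygy $\omega_1$ in the slot where $\fm I(-)=0$ is needed: $\omega_1\subset\fm R^{b_0(\omega)}$, so $\fm I\omega_1=0$ in the ambient situation, and this in turn makes $I(\omega\otimes_R\omega_1)=0$, which is exactly what collapses Lemma \ref{bettiandgamma}(1) into the clean formula above. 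Here one must keep careful track of which $\Tor$'s remain available: only $\Tor_1$ and $\Tor_2$ of $(\omega,\omega)$ --- equivalently $\Tor_1(\omega,\omega_1)$ and $\Tor_1(\omega_1,\omega)$ --- are known to vanish, and this is just enough to feed Lemma \ref{bettiandgamma}(2) and the accompanying length bookkeeping for the pair $(\omega_1,\omega)$. Once the identity $2\gamma_I(\omega)-\gamma_I(\omega\otimes_R\omega)=b_0(I)$ (equivalently $b_1(\omega)/b_0(\omega)=b_0(I)-\gamma_I(\omega)$) is in hand, the nonnegativity of $\gamma_I(\omega\otimes_R\omega)$ finishes the proof in one line.
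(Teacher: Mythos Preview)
Your overall strategy matches the paper's: combine $b_1(\omega)/b_0(\omega)=b_0(I)-\gamma_I(\omega)$ from Lemma~\ref{IMbetti}(1) with an upper bound $b_1(\omega)/b_0(\omega)\le\gamma_I(\omega)$ to force $\gamma_I(\omega)\ge b_0(I)/2$, and substitute back. The paper's proof is exactly those three lines, using the inequality in Lemma~\ref{bettiandgamma}(1) directly rather than passing through the identity $2\gamma_I(\omega)-\gamma_I(\omega\otimes\omega)=b_0(I)$ of Lemma~\ref{sum}; your route via that identity is valid but unnecessarily elaborate, since only the bound $\gamma_I(\omega)\ge b_0(I)/2$ is used.

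The substantive divergence is your attempt to dispense with the hypothesis $\fm I\omega=0$ that Lemmas~\ref{IMbetti}, \ref{sum}, and \ref{bettiandgamma}(2) all require. You are right that this hypothesis is absent from the statement, and the paper simply invokes Lemma~\ref{IMbetti} without comment. But your workaround does not remove it. The inference ``$\omega_1\subset\fm R^{b_0(\omega)}$, so $\fm I\omega_1=0$'' already presupposes $\fm^2I=0$. Even granting that, shifting the $M$-slot to $\omega_1$ costs a Tor: with $M=\omega_1$ you know only $\Tor_1(\omega_1,\omega)=\Tor_2(\omega,\omega)=0$, not the two consecutive vanishings $\Tor_1(\omega_1,\omega)=\Tor_2(\omega_1,\omega)=0$ that Lemma~\ref{IMbetti}(1) demands, and the output of Lemma~\ref{bettiandgamma} for $(\omega_1,\omega)$ involves $b_2(\omega)/b_1(\omega)$ and $\gamma_I(\omega_1)$, not the quantities in your two displayed identities. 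Moreover the step ``$\fm I\omega_1=0\Rightarrow I(\omega\otimes_R\omega_1)=0$'' is unjustified: from $\omega\otimes\omega_1\hookrightarrow\fm\,\omega^{b_0(\omega)}$ one gets $I(\omega\otimes\omega_1)\subset\fm I\,\omega^{b_0(\omega)}$, which vanishes exactly when $\fm I\omega=0$ --- the very hypothesis you are trying to avoid. So either the proposition is read with $\fm I\omega=0$ (or $\fm^2I=0$) implicit, in which case your argument collapses to the paper's shorter one, or the hypothesis is genuinely absent, in which case neither argument is complete.
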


    \begin{proof}
         Assume $\omega$ is non-free. Set $b_i = b_i(\omega)$. By Lemma \ref{bettiandgamma}, we have 
        \[
           \frac{b_1}{b_0}  \leq \gamma_I(\omega) .
        \]
        While on the other hand, by Lemma \ref{IMbetti}, we have 
        \[
        \frac{b_1}{b_0} = h - \gamma_I(\omega).
        \]
        Hence we can conclude $\gamma_I(\omega) \geq \dfrac{h}{2}$ and the result follows. 
    \end{proof}

    \begin{remark}
    Now we give some calculations on $\omega$ over the ring $(R,\fm)$ with an ideal $I$ such that $\fm^2I = 0$. If $\fm I \ne 0$, then since $\fm I\omega \subset \Soc(\omega)$ and $\dim_k \Soc(\omega) = 1$, we have $\lambda(\fm I\omega) \leq b_0(\fm I\omega) = 1$. But moreover $\fm I \neq 0$, so $\lambda(\fm I\omega) = 1$. Now denote $b_0(\omega) = \dim_k\Soc(R) = a$ and set $N = \omega_1$. If $\omega$ is $I$-free, then we have 
    \begin{enumerate}
            \item $\lambda(\omega) = \lambda(R) = \lambda(R/I) + b_0(I) + \lambda(\fm I) = s + h + c$.
            \item $\lambda(I\omega) = \lambda(\omega) - \lambda(\omega/I\omega) = s + h + c - as$.
    \end{enumerate}
\end{remark}
    \begin{prop}
        \label{prop3.1}
         Let $\Soc(R) = \fm I$. Assume $h > 1$. If $\lambda(\omega_1/I\omega_1) \leq \lambda(\omega/I\omega)$, then 
         \[
        \lambda(\fm I) \leq \frac{\lambda(R/I) + b_0(I)}{b_0(I)-1}.
         \]
         In particular if $h > s+2$, then $R$ is Gorenstein.
    \end{prop}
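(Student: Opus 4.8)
The plan is to combine the lower bound on $\lambda(\omega_1/I\omega_1)$ coming from Lemma~\ref{betti1socle} with the upper bound assumed in the hypothesis, exploiting the fact that the term $\lambda(I\omega)$ occurs on both sides and cancels. First I would collect the relevant length identities. Since $\Soc(R)=\fm I$ and every socle element is killed by $\fm$, we have $\fm^2 I=0$; hence $\fm I$ is a $k$-vector space with $\dim_k\fm I=\lambda(\fm I)=c$, and by \ref{omega} we get $b_0(\omega)=\dim_k\Soc(R)=\lambda(\fm I)=c$. Also by \ref{omega}, $\lambda(\omega)=\lambda(R)=s+h+c$, so $\lambda(\omega/I\omega)=s+h+c-\lambda(I\omega)$. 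If $\omega$ is free, then $\lambda(\omega)=\lambda(R)$ forces $b_0(\omega)=1$ and $\omega\cong R$, so $R$ is Gorenstein, $c=1$, and both conclusions hold at once; I may therefore assume $\omega$ is non-free.

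Now applying Lemma~\ref{betti1socle} with $M=\omega$ gives
\[
\lambda(\omega_1/I\omega_1)\ \ge\ b_0(\omega)\,b_0(I)-\lambda(I\omega)\ =\ ch-\lambda(I\omega),
\]
while the hypothesis gives $\lambda(\omega_1/I\omega_1)\le\lambda(\omega/I\omega)=s+h+c-\lambda(I\omega)$. Chaining the two inequalities and cancelling $\lambda(I\omega)$ leaves $ch\le s+h+c$, that is, $c(h-1)\le s+h$; since $h>1$ this is precisely $\lambda(\fm I)=c\le\frac{s+h}{h-1}$. For the last assertion, note $c=\lambda(\Soc(R))\ge 1$, and $h>s+2$ is equivalent to $\frac{s+h}{h-1}<2$; hence $1\le c<2$, so $c=\dim_k\Soc(R)=1$ and $R$ is Gorenstein.

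The only step carrying real content is the invocation of Lemma~\ref{betti1socle}; the remainder is elementary length bookkeeping. The subtleties to keep in mind are that $c$ is a positive integer, so that $c<2$ genuinely forces $c=1$, and the trivial-but-necessary handling of the case where $\omega$ happens to be free; beyond fitting these pieces together I do not expect any serious obstacle.
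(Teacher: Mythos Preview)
Your proof is correct and follows essentially the same route as the paper: apply Lemma~\ref{betti1socle} to $M=\omega$, combine with the hypothesis $\lambda(\omega_1/I\omega_1)\le\lambda(\omega/I\omega)$, and simplify to $c(h-1)\le s+h$. Your bookkeeping is in fact a bit cleaner than the paper's: by writing $\lambda(\omega/I\omega)=s+h+c-\lambda(I\omega)$ and leaving $\lambda(I\omega)$ unexpanded, you let it cancel directly, whereas the paper passes through the separate bound $\lambda(\omega/I\omega)\le cs$ (so that $cs$ appears on both sides and cancels instead). You also explicitly dispose of the case where $\omega$ is free, which is needed to invoke Lemma~\ref{betti1socle}; the paper leaves this implicit.
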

    \begin{proof}
        Set $N = \omega_1$. Recall that $c = \lambda(\fm I)$ Apply \ref{betti1socle}, we have 
        \[
        \lambda(N/IN) \geq ch - s-h-c + cs.
        \]
        while on the other hand
        \[
        \lambda(\omega/I\omega) \leq b_0(\omega)\lambda(R/I) = cs
        \]
        hence we get
        \[
         cs \geq ch - s-h-c + cs.
        \]
        Simplifying it, we have 
        \[
        c \leq \frac{s+h}{h-1}.
        \]
        If $h > s+2 $, then $\frac{s+h}{h-1} < 2 $ and $c = 1$, i.e. $\dim_k \Soc(R) = 1$. $R$ is Gorenstein.
    \end{proof}

    \begin{cor}
        \label{cor3.4}
        Let $(R,\fm)$ be an Artinian ring such that $\Soc(R) = \fm^2$ and $b_0(\fm) \geq 4$. If $b_1(\omega) \leq b_0(\omega)$, then $R$ must be Gorenstein.
    \end{cor}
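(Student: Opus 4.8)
The plan is to deduce this from Proposition \ref{prop3.1} by specializing the ideal to $I = \fm$. First I would dispose of the trivial cases: if $\omega$ is free then $R$ is already Gorenstein and there is nothing to prove, so assume $\omega$ is non-free; and $b_0(\fm) \geq 4$ guarantees $\fm \neq 0$, so $\Soc(R) = \fm^2$ is a nonzero $k$-subspace of $R$ and in particular $\fm^2 \neq 0$.

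Next I would record the relevant invariants for the choice $I = \fm$. We have $\fm I = \fm^2 = \Soc(R)$, so the hypothesis $\Soc(R) = \fm^2$ is precisely $\Soc(R) = \fm I$; moreover $s = \lambda(R/I) = \lambda(R/\fm) = 1$ and $h = b_0(I) = b_0(\fm) \geq 4$. Since $R/I = k$, the modules $\omega/I\omega$ and $\omega_1/I\omega_1$ are $k$-vector spaces, so by Nakayama's lemma $\lambda(\omega/I\omega) = b_0(\omega)$ and $\lambda(\omega_1/I\omega_1) = b_0(\omega_1) = b_1(\omega)$. Hence the hypothesis $b_1(\omega) \leq b_0(\omega)$ translates verbatim into the length inequality $\lambda(\omega_1/I\omega_1) \leq \lambda(\omega/I\omega)$ required by Proposition \ref{prop3.1}.

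With these identifications in place, all hypotheses of Proposition \ref{prop3.1} hold ($\Soc(R) = \fm I$, $h = b_0(\fm) > 1$, and the length inequality), so I would invoke it directly. Its ``in particular'' clause requires $h > s + 2$, i.e.\ $b_0(\fm) > 3$, which is exactly the assumption $b_0(\fm) \geq 4$; this yields that $R$ is Gorenstein. Alternatively one can unwind the main conclusion $\lambda(\fm^2) = \lambda(\fm I) \leq \frac{s + h}{h - 1} = \frac{b_0(\fm) + 1}{b_0(\fm) - 1} < 2$, and since $\lambda(\fm^2) = \dim_k \Soc(R) \geq 1$ this forces $\dim_k \Soc(R) = 1$, i.e.\ $R$ Gorenstein.

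I do not anticipate a genuine obstacle: after the substitution $I = \fm$ the argument is bookkeeping, and the only point needing attention is that Proposition \ref{prop3.1} (through Lemma \ref{betti1socle}) imposes no $I$-freeness hypotheses — which over $R/\fm = k$ would in any event be automatic — and that the degenerate possibilities ($\fm = 0$, $\fm^2 = 0$, or $\omega$ free) are excluded by $b_0(\fm) \geq 4$ or are trivially Gorenstein.
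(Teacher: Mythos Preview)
Your proof is correct and follows exactly the same approach as the paper: specialize Proposition~\ref{prop3.1} to $I=\fm$, so that $s=1$ and $h=b_0(\fm)\geq 4>s+2$, and invoke its ``in particular'' clause. You spell out more of the bookkeeping (translating $b_i(\omega)$ into $\lambda(\omega_i/\fm\omega_i)$ and disposing of the free case), but the substance is identical.
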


    \begin{proof}    
        Applying Proposition~\ref{prop3.1} with $I=\fm$ and $s=1$, the result follows.

    \end{proof}

    \begin{cor}
     \label{cor3.5}
        Let $(R,\fm)$ be a local Artinian ring such that $\fm^2I = 0$. Assuming (i) $h > 2$; (ii) $b_2(\omega) \leq b_1(\omega)$. If there exists a finite non-free module $M$ such that for three consecutive integer values $i>1$ $\Tor_{i}(M,\omega) = 0$, $\fm IM = 0$ and that $M_t,\omega_1,\omega_2$, and $\omega_3$ are $I$-free for $i \leq t \leq i+3$. Then $R$ is Gorenstein.
\end{cor}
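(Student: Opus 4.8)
The plan is to convert the three consecutive vanishings $\Tor_i,\Tor_{i+1},\Tor_{i+2}(M,\omega)=0$ into a quadratic equation for $\gamma_I(M)$, play hypothesis (ii) against it, and use $h>2$ to force $\dim_k\Soc(R)=1$. If $\omega$ is free we are done, so assume $\omega$ is non-free; set $a:=b_0(\omega)=\dim_k\Soc(R)$, so Gorensteinness means $a=1$, and observe that $\fm^2I=0$ gives $\fm I\subseteq\Soc(R)$, hence $c:=\lambda(\fm I)\le a$. Since any first syzygy sits in $\fm$ times a free module, $\fm I\omega_1=0$; replacing $\omega$ by $\omega_1$ shifts the three consecutive indices down by one (they remain positive as $i>1$) while the relevant syzygies of $M$ and of $\omega$ stay $I$-free, so we may assume $\Tor_t(M,\omega_1)=0$ for $t=1,2,3$ with $M,M_1,M_2$ and $\omega_1,\omega_2,\omega_3$ all $I$-free.

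Apply Theorem~\ref{ThreeVanishP} to $(M,\omega_1)$: both modules are non-free, annihilated by $\fm I$, and $\fm^2I=0$, so
\[
s\gamma^{2}-sh\gamma+(c+h-sh)=0
\]
holds for $\gamma=\gamma_I(M)$ and for $\gamma=\gamma_I(\omega_1)$. By Lemma~\ref{IMbetti}(1), $b_1(M)/b_0(M)=h-\gamma_I(M)$ and $b_1(\omega_1)/b_0(\omega_1)=h-\gamma_I(\omega_1)$, both positive since $M,\omega_1$ are non-free, so $\gamma_I(M),\gamma_I(\omega_1)<h$; both are nonzero because $b_{k+1}(\omega)=\gamma_I(M)\,b_k(\omega)\neq0$ for $k\gg0$ (Lemma~\ref{bettiandgamma}(3)), and by Lemma~\ref{sum}, $\gamma_I(M)+\gamma_I(\omega_1)\ge h$. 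As the two roots of the quadratic sum to $h$, this forces $\gamma_I(M)$ and $\gamma_I(\omega_1)$ either to be these two distinct roots (with sum exactly $h$) or to coincide with a double root equal to $h/2$.

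Now bring in (ii). Since $b_2(\omega)=b_1(\omega_1)$ and $b_1(\omega)=b_0(\omega_1)$, Lemma~\ref{IMbetti}(1) applied to $\omega_1$ (with companion $M$) turns $b_2(\omega)\le b_1(\omega)$ into $h-\gamma_I(\omega_1)\le1$, i.e. $\gamma_I(\omega_1)\ge h-1$. Because $h>2$, $h-1>h/2$, which kills the double-root case; hence the roots are distinct, $\gamma_I(\omega_1)$ is the larger one, and $\gamma_I(M)=h-\gamma_I(\omega_1)\le1$. To finish I would feed this into the length bookkeeping around $0\to\omega_1\to R^{a}\to\omega\to0$: from $\lambda(\omega)=\lambda(R)$ we get $\lambda(\omega_1)=\lambda(R)(a-1)$, and Lemma~\ref{IMbetti}(2) gives $\lambda(I\omega_1)=(c+h-sh)a$ (the exceptional case $\fm I\omega\ne0$, where $\fm I\omega=\Soc(\omega)$ is one-dimensional, only shifts this by a constant); since $\omega_1$ is $I$-free these identities express $\gamma_I(\omega_1)$ as an explicit function of $s,h,c,a$, and plugging it into the displayed quadratic collapses to a quadratic relation in $a$ whose roots are $1+\gamma_I(M)$ and $1+\gamma_I(\omega_1)$. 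Combining $a\in\{1+\gamma_I(M),\,1+\gamma_I(\omega_1)\}$ with $0<\gamma_I(M)\le1$, the integrality of $\gamma_I(M)$ from Lemma~\ref{one}(2), and $c\le a$, together with $h>2$, forces $a=1$ (equivalently $IM=0$, whence $\omega$ is free), so $R$ is Gorenstein.

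I expect the last step to be the main obstacle: making the arithmetic close off, in particular absorbing the exceptional possibility $\fm I\omega\ne0$ and eliminating the alternative $a=1+\gamma_I(M)$ with only $h>2$ (rather than a stronger bound on $h$, as in Corollary~\ref{cor3.4}). By contrast, the bookkeeping needed to see that Theorem~\ref{ThreeVanishP}, Lemma~\ref{IMbetti}, Lemma~\ref{sum} and Lemma~\ref{one} genuinely apply to the shifted pair $(M,\omega_1)$---keeping track of which syzygies of $M$ and $\omega$ must be $I$-free, and over which ranges of indices the $\Tor$'s vanish---is routine but must be done carefully, as the indices in the hypotheses are tight.
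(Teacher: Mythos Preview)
Your approach is substantially more elaborate than the paper's and, as you yourself flag, does not close. The paper never invokes Theorem~\ref{ThreeVanishP}, never invokes Lemma~\ref{one}, and carries out no length bookkeeping in $a=b_0(\omega)$ whatsoever. After the same reduction you make (replace $M$ by $M_{i-1}$ so that $\Tor_t(M,\omega)=0$ for $t=1,2,3$, and set $N=\omega_1$, noting $\fm IN=0$), the paper's entire argument is a two-case split coming straight out of Lemma~\ref{sum}: since $\gamma_I(M)+\gamma_I(N)\ge h$, at least one of $\gamma_I(M),\gamma_I(N)$ is $\ge h/2>1$. If it is $\gamma_I(M)$, Lemma~\ref{bettiandgamma}(3) gives $b_2(\omega)/b_1(\omega)=b_1(N)/b_0(N)=\gamma_I(M)>1$, contradicting~(ii). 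If it is $\gamma_I(N)$, the paper applies Lemma~\ref{IMbetti}(1) with the roles of $M$ and $N$ exchanged to write $b_1(N)/b_0(N)=h-\gamma_I(N)$ and again derives a contradiction with~(ii). No quadratic equation, no integrality, no analysis of $a$.

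Beyond being unfinished, your endgame has real gaps even as a plan. Your appeal to Lemma~\ref{one}(2) for the integrality of $\gamma_I(M)$ is not justified by the hypotheses: that lemma requires $\Tor_i(M,N)=0$ on the whole range $[1,\log_2 b_1(N)+1]$, while you have only three consecutive vanishings. Your use of Lemma~\ref{IMbetti}(2) to compute $\lambda(I\omega_1)$ needs $\omega$ itself (not just $\omega_1,\omega_2,\omega_3$) to be $I$-free, which is not among the stated assumptions. And the asserted ``quadratic relation in $a$ whose roots are $1+\gamma_I(M)$ and $1+\gamma_I(\omega_1)$'' is neither derived nor obviously true; without integrality of $\gamma_I(M)$ there is no visible way to rule out the branch $a=1+\gamma_I(M)$ using only $h>2$. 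The key simplification you are missing is that Lemma~\ref{bettiandgamma}(3) already gives $b_2(\omega)/b_1(\omega)=\gamma_I(M)$ directly, so hypothesis~(ii) becomes the single inequality $\gamma_I(M)\le 1$, and the contradiction should be chased at the level of this Betti ratio rather than through~$a$.
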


\begin{proof} 
         Assume by contradiction $\omega$ is non-free. Then we can replace $M$ by $M_{i-1}$ to get $\Tor_1(M,N) = \Tor_2(M,N) = \Tor_3(M,N) = 0$. Let $N = \omega_1$. Then we have $\fm IN = 0$. Now apply \ref{sum}, we have $\gamma_I(M) + \gamma_I(N) = h + \gamma_I(M\otimes N)$. In particular, at least one of $\gamma_I(M)$ and $\gamma_I(N)$ is no less than $\frac{h}{2}$. If $\gamma_I(M) \geq \frac{h}{2} > 1$, by Lemma \ref{bettiandgamma}, we also have $b_1(N) = \gamma_I(M)b_0(N)$ and therefore it contradicts $b_2(\omega) \leq b_1(\omega)$. If $\gamma_I(N) > \frac{h}{2}$, we have by Lemma \ref{IMbetti}(1), $b_1(N) = (h -\gamma_I(N))b_0(N) > \frac{h}{2} b_0(N)$ which is also a contradiction. 
    \end{proof}


\begin{thebibliography}{99}



\bibitem{alperin1981representations}
J. Alperin and I. Evens.
\emph{Representations, resolutions, and Quillen's dimension theorem}.
J. Pure Appl. Algebra, Vol. 22, 1st ed., 1981, pp. 1--9.


\bibitem{avramov1998infinite}
L. L. Avramov.
\emph{Infinite Free Resolutions}.
In: J. Elias, J. M. Giral, R. M. Miró-Roig, and S. Zarzuela (eds), 
Six Lectures on Commutative Algebra, Progress in Mathematics, 
Birkhäuser, Basel, 1998. 




\bibitem{gasharov1990boundedness}
V. N. Gasharov and I. V. Peeva.
\emph{Boundedness versus periodicity over commutative local rings}.
Transactions of the American Mathematical Society, vol. 320, pp. 569--580, 1990.



\bibitem{huneke2014vanishing}
Craig Huneke, Liana Sega, and Adela Vraciu.
\emph{Vanishing of {E}xt and {T}or over {C}ohen-{M}acaulay local rings}.
Illinois J. Math., Vol. 48, 1st ed., 2004, pp. 295--317.




\bibitem{LESCOT1985287}
Jack Lescot.
\emph{Asymptotic properties of Betti numbers of modules over certain rings}.
Journal of Pure and Applied Algebra, vol. 38, no. 2, pp. 287-298, 1985.


\bibitem{lyle2022exterior}
Justin Lyle, Jonathan Montaño, and Keri Sather-Wagstaff.
\emph{Exterior powers and Tor-persistence}.
Journal of Pure and Applied Algebra, Vol. 226, No. 4, p. 106890,
2022, ISSN 0022-4049


\bibitem{jorgensen2007growth}
David A. Jorgensen and Graham J. Leuschke.
\emph{On the growth of the {B}etti sequence of the canonical module}.
Mathematische Zeitschrift, Vol. 256, No. 3, pp. 647–659,
2007, ISSN 0025-5874.

\end{thebibliography}
\end{document}